\newtheorem{theorem}{Theorem}[section]
\newtheorem{thm}[theorem]{Theorem}
\newtheorem{prop}[theorem]{Proposition}
\newtheorem{fact}[theorem]{Fact}
\newtheorem{cor}[theorem]{Corollary}
\newtheorem{proposition}[theorem]{Proposition}
\newtheorem{lemma}[theorem]{Lemma}
\newtheorem{conj}[theorem]{Conjecture}
\newtheorem*{thm*}{Theorem}
\theoremstyle{definition}
\newtheorem{defn}[theorem]{Definition}
\newtheorem{example}[theorem]{Example}
\newtheorem{notation}[theorem]{Notation}
\newtheorem{qn}[theorem]{Question}
\theoremstyle{remark}
\newtheorem{remark}[theorem]{Remark}
\newcommand{\Z}{\mathbb{Z}}
\newcommand{\N}{\mathbb{N}}
\newcommand{\R}{\mathbb{R}}
\newcommand{\CM}{\mathcal M}
\newcommand{\sub}{\subseteq}
\newcommand{\CN}{\mathcal N}
\newcommand{\ra}{\rangle}
\newcommand{\la}{\langle}
\newcommand{\CH}{\mathcal H}
\newcommand{\C}{\mathbb{C}}
\newcommand{\CG}{\mathcal G}
\newcommand{\CR}{\mathcal R}
\newcommand{\CV}{\mathcal V}
\newcommand{\CU}{\mathcal U}
\newcommand{\cal}[1]{\ensuremath{\mathcal{#1}}}
\title{Locally definable subgroups of semialgebraic groups}
\author {El\'ias Baro}
\address{Departamento de \'Algebra, Geometr\'ia y Topolog\'ia, Facultad de Matem\'aticas, Universidad Complutense de Madrid, Madrid, Spain}
\email{eliasbaro@pdi.ucm.es}
\author {Pantelis  E. Eleftheriou}
\address{Department of Mathematics and Statistics, University of Konstanz, Box 216, 78457 Konstanz, Germany}
\email{panteleimon.eleftheriou@uni-konstanz.de}
\thanks{The first author was supported by the program MTM2017-82105-P. The second author was supported by an Independent Research Grant from the German Research Foundation (DFG) and a Zukunftskolleg Research Fellowship.}
\author{Ya'acov Peterzil}
\address{Department of Mathematics, University of Haifa, Haifa, Israel}
\email{kobi@math.haifa.ac.il}
\begin{document}

\subjclass[2010]{03C64,03C68, 22B99, 20N99}
\keywords{semialgebraic groups, locally definable groups, approximate groups, generic sets, lattices}

\date{\today}
\begin{abstract}
We prove the following instance of a conjecture stated in \cite{EP2}. Let $G$ be an
abelian semialgebraic group over a real closed field $R$ and let $X$ be a semialgebraic subset of $G$. Then the group generated by $X$ contains a generic set and, if connected, it is divisible.

More generally, the same result holds when $X$ is definable in any o-minimal expansion of $R$ which is elementarily equivalent to
 $\mathbb R_{an,exp}$.

We observe that the above statement is equivalent to saying: there exists an $m$ such that $\Sigma_{i=1}^m(X-X)$
 is an approximate subgroup of $G$.

\end{abstract}

 \maketitle

\section{Introduction}

Locally definable groups arise naturally in the study of definable groups in o-minimal structures.
In this paper we are mostly interested in
{\em definably generated}  groups, namely locally definable groups which are
generated by definable sets (see Section \ref{preliminaries}  for basic definitions).
An important example of such groups is the
universal cover of a definable group. Indeed, a definable group in an o-minimal structure can be endowed with a definable
manifold structure making the group into a topological group and then, similarly to the Lie context, one can
construct its universal covering group, in the category of locally definable groups, see \cite{edel2}. This universal covering is generated by a definable set.

The universal covering is an example  of a locally definable group $\CU$
with a definable {\em (left) generic} set $X$; that is, a definable set such that $AX=\CU$ for some countable
 subset $A\sub \CU$ (see \cite[Lemma 1.7]{EP}).
 In \cite{EP2}, it was conjectured that every definably generated abelian group is of this form:
\begin{conj}\label{conj1} Let $\CU$ be an abelian, connected, definably generated group. Then $\CU$ contains a
definable generic set.
\end{conj}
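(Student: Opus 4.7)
The plan is to work additively, assume $X$ is definable, symmetric, contains the identity $e$, and that $\CU=\bigcup_n nX$, where $nX:=X+\cdots+X$ ($n$ summands). By the equivalence recorded in the abstract, it suffices to produce an $m$ for which $mX$ is an approximate subgroup of the ambient semialgebraic group $G$, i.e.\ $mX+mX\sub F+mX$ for some finite $F\sub G$. I would exploit o-minimal dimension theory and split the argument according to how dimension behaves under iteration.

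First, since $\dim(nX)$ is bounded by $\dim G$, the dimensions stabilise: there exists $n_0$ with $d:=\dim(nX)=\dim(n_0X)$ for all $n\ge n_0$. Next, consider the $d$-dimensional stabiliser
\[
H_n:=\{g\in G:\dim\bigl((g+nX)\cap nX\bigr)=d\};
\]
by o-minimality each $H_n$ is a definable subgroup of $G$, and the chain $(H_n)_n$ is increasing, hence stabilises at some definable $H\le G$. The approximate subgroup we are looking for should, up to finite index, be a union of cosets of $H$. Passing to $G/H$ then reduces the problem to the case where no further dimension can be ``absorbed'' by translation; in that reduced setting one wants to see that the image $\bar X$ generates a discretely embedded subgroup --- essentially a lattice --- in the quotient.

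In the semialgebraic case over $\R$, connected abelian semialgebraic groups are Lie groups of the form $\R^a\times(S^1)^b\times F$ with $F$ finite, so the residual problem becomes a concrete one about finitely generated subgroups of $\R^a$ cut out by semialgebraic data; there a direct compactness and coset-counting argument produces the desired finite $F$. For arbitrary real closed $R$, one transfers via elementary equivalence, which is presumably why the paper invokes theories elementarily equivalent to $\R_\mathrm{an,exp}$: such theories admit a workable standard-part map and tame topological behaviour. Divisibility of a connected $\CU$ would then follow from general properties of connected locally definable groups containing a generic set.

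The main obstacle, as I see it, is precisely the second reduction: after the dimension has frozen, $nX$ can still grow in subtle ways along directions complementary to $H$, and ruling out this growth is essentially a Plünnecke-type problem for definable sets in abelian semialgebraic groups. This is the geometric heart of the conjecture, and handling it will require careful use of the definable manifold structure on $G$, the classification of its definably connected subgroups, and --- away from the archimedean setting --- a standard-part argument tying the general case back to the classical Lie picture.
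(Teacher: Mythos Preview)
The statement you are attempting is labelled a \emph{conjecture} in the paper; it is not proved there in full generality and remains open. What the paper does establish is the special case in which $\CU$ sits inside an abelian semialgebraic group over a real closed field $R$ (Theorem~\ref{t:mainsemi}), under an additional hypothesis on the ambient theory. Your proposal already drifts toward that special case (you speak of ``the ambient semialgebraic group $G$'' and invoke the Lie classification $\R^a\times(S^1)^b\times F$), so it is fair to compare it with the paper's proof of that theorem rather than with a nonexistent proof of the full conjecture.

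The two approaches are entirely different. The paper never uses dimension stabilisation or stabiliser subgroups. Instead it proceeds structurally: it shows that the generic property lifts along short exact sequences of locally definable abelian groups (Theorem~\ref{t:Thmexten}, via Propositions~\ref{propExt} and~\ref{p:thmExt}); reduces an arbitrary semialgebraic abelian $G$ first to the definably compact case and then, via Barriga's result (Fact~\ref{barriga}), to a subgroup of the universal cover of $H(R)^{\mathrm o}$ for an algebraic group $H$; splits $H$ by Chevalley into a linear part and an abelian variety; handles the linear part directly (Corollary~\ref{cor:linear}); and handles the abelian-variety part via the Peterzil--Starchenko definability of theta functions (Fact~\ref{AbelianTorus}), which realises $A$ as a locally definable quotient of a subgroup of $\langle R^g,+\rangle$ and thereby reduces to Fact~\ref{EP}. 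The role of $\R_{an,exp}$ is solely to make this analytic covering definable; there is no standard-part or transfer argument of the kind you envisage.

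As for your own outline, the gap you flag is genuine and is precisely the content of the conjecture. After $\dim(nX)$ freezes and you pass to $G/H$, nothing in your sketch prevents the image of $nX$ from growing indefinitely without ever becoming an approximate subgroup; controlling this is exactly what is not known in general and why the statement is still a conjecture. There are also smaller issues upstream: your $H_n=\{g:\dim((g+nX)\cap nX)=d\}$ is not a subgroup as written (take $G=\R$, $nX=[-1,1]$, so $H_n=(-2,2)$), and an increasing chain of definable subgroups of $G$ need not stabilise without further argument. But even granting repairs to those points, the ``Pl\"unnecke-type problem'' you isolate is not a technicality --- it is the whole difficulty, and your proposal offers no mechanism to resolve it.
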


Note that by \cite[Claim 3.11]{EP2}, we may assume in the above conjecture that $\CU$ is generated by a definably compact set.

It has been shown in recent papers that the above conjecture can be restated in several ways (see for example \cite{BEM}).
We will be using the equivalences below, for which we first need a definition.
\begin{defn}
 Given an abelian, connected, definably generated group $\CU$,
we say that a locally definable normal subgroup $\Gamma < \CU$ is a {\em lattice} if $\dim(\Gamma)=0$, and $\CU/\Gamma$
 is definable; that is, there exist a definable group $G$ and a locally definable surjective homomorphism from
 $\CU$ onto $G$, whose kernel is $\Gamma$.
 \end{defn}
\begin{fact}[{\cite[Proposition 3.5]{EP2} and \cite[Theorem 2.1]{EP}}]\label{Thm21}
Let $\CU$ be an abelian, connected, definably generated group.  Then the following
are equivalent:
\begin{itemize}
 \item [(1)] $\CU$ contains a definable generic set.
 \item [(2)] $\CU$ admits a lattice.
 \item [(3)] $\CU$ admits a lattice isomorphic to $\mathbb{Z}^k$, for some $k$.
\end{itemize}
Moreover, each of the above clauses implies that $\CU$ is divisible.
\end{fact}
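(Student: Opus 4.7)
The plan is to establish the cycle $(3)\Rightarrow(2)\Rightarrow(1)\Rightarrow(3)$ and then deduce divisibility from clause $(2)$. The implication $(3)\Rightarrow(2)$ is immediate since a lattice isomorphic to $\Z^k$ is in particular a lattice.

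For $(2)\Rightarrow(1)$, let $\Gamma\leq\CU$ be a lattice, write $G=\CU/\Gamma$ and let $\pi\colon\CU\to G$ be the projection. Since $G$ is a definable group in an o-minimal structure, it has a definable generic set $Y$ (for instance, any definable open set of the same dimension as $G$ intersected with a definably compact piece witnessing genericity in the definable sense, so that $G=A+Y$ for some finite $A\subseteq G$). Because $\dim\Gamma=0$, the map $\pi$ is locally a definable homeomorphism; covering $Y$ by finitely many trivializing open subsets of $G$ and using o-minimal definable choice, I would build a definable lift $X\subseteq\CU$ with $\pi(X)=Y$. Lifting $A$ to a finite $\tilde A\subseteq\CU$ then gives $\CU=\tilde A+X+\Gamma$, and since $\Gamma$ is a countable union of zero-dimensional definable sets, $\tilde A+\Gamma$ is a countable translating set for $X$; hence $X$ is generic.

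For $(1)\Rightarrow(3)$, which I expect to be the main obstacle, I would start from a definable generic $X\subseteq\CU$ and, invoking \cite[Claim 3.11]{EP2}, reduce to the case where $X$ is definably compact and symmetric around $0$. The goal is to exhibit a locally definable $0$-dimensional subgroup $\Gamma\leq\CU$ with definable quotient. The plan is a stabilizer-type construction in the spirit of Pillay: consider the set of $g\in\CU$ for which $g+X$ and $X$ agree up to a set of smaller dimension, and show that an appropriate ``infinitesimal'' part of this set is a locally definable subgroup $\Gamma$. The delicate point is to verify simultaneously that (a) $\Gamma$ is $0$-dimensional, which one would establish by exploiting that any positive-dimensional subgroup would force $X$ to be ``too large'' in a way incompatible with genericity, and (b) the charts furnished by translates of $X$ descend to a definable atlas on $\CU/\Gamma$, so that this quotient carries a definable group structure. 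Once $\Gamma$ is obtained, finite generation follows because $\CU$ is generated by a definably compact set (so only finitely many translates of $X$ suffice to cover a fundamental domain, producing finitely many generators of $\Gamma$), and torsion-freeness follows from the discreteness of $\Gamma$ inside the connected group $\CU$; together these give $\Gamma\cong\Z^k$.

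Finally, for divisibility, once $(2)$ holds the definable quotient $G=\CU/\Gamma$ is a definable connected abelian group, hence divisible by standard o-minimal group theory. Given $u\in\CU$ and $n\geq 1$, lift a solution of $n\pi(v)=\pi(u)$ in $G$ to some $\tilde v\in\CU$, so that $n\tilde v-u=\gamma\in\Gamma$. To correct by an element of $\Gamma$, I would use the definable $n$-torsion $G[n]\cong(\Z/n)^k$: every class in $\Gamma/n\Gamma$ lifts to an $n$-torsion element of $G$, and pulling such an element back to $\CU$ provides the required adjustment showing that the $n$-th power map on $\CU$ is surjective. The hard part of the whole argument is, as indicated, the construction of the lattice in $(1)\Rightarrow(3)$; the other steps are essentially bookkeeping combined with standard o-minimal tools.
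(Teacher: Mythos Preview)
The paper does not prove this statement at all: it is recorded as a \emph{Fact} with a bare citation to \cite[Proposition~3.5]{EP2} and \cite[Theorem~2.1]{EP}, so there is nothing in the paper to compare your argument against. That said, two parts of your sketch have genuine problems.

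For $(1)\Rightarrow(3)$ your proposed ``stabilizer-type construction in the spirit of Pillay'' is aimed at the wrong object. The set of $g$ with $(g+X)\triangle X$ non-generic, or its infinitesimal part, produces a \emph{large} (type-)definable subgroup such as $\CU^{00}$, not a $0$-dimensional discrete one; your clause~(a) would in fact fail for that stabilizer. The actual arguments in \cite{EP2,EP} do not extract the lattice this way: roughly, from a definable generic $X$ one first obtains $\CU^{00}$ and identifies $\CU/\CU^{00}$ with a compact connected abelian Lie group, and the lattice $\Gamma\cong\Z^k$ is then read off from the covering $\R^k\to\CU/\CU^{00}$, not from a stabilizer of $X$. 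Your outline does not supply the key idea here.

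Your divisibility argument is circular. From $0\to\Gamma\to\CU\to G\to 0$ with $\Gamma\cong\Z^k$ and $G$ divisible one gets the exact sequence
\[
0\;\longrightarrow\;\CU[n]\;\longrightarrow\;G[n]\;\longrightarrow\;\Gamma/n\Gamma\;\longrightarrow\;\CU/n\CU\;\longrightarrow\;0,
\]
so the surjectivity of $G[n]\to\Gamma/n\Gamma$ that you invoke is \emph{equivalent} to $n\CU=\CU$, which is exactly what you are trying to prove. A correct finish is: since $G$ is divisible, $n\CU+\Gamma=\CU$; as $n\Gamma\subseteq n\CU$, the index $[\CU:n\CU]$ is at most $[\Gamma:n\Gamma]=n^k$, hence bounded. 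By \cite[Fact~2.3(2)]{EP2} a locally definable subgroup of bounded index is compatible, and connectedness of $\CU$ then forces $n\CU=\CU$. Your $(2)\Rightarrow(1)$ sketch, by contrast, is essentially the standard argument and is fine.
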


In this note, we study Conjecture \ref{conj1} for definably generated subgroups
of definable groups.
 To that aim, we introduce the following notion.

\begin{defn}\label{genproperty}Let $\mathcal{M}$ be an o-minimal structure.
 We say that an abelian locally definable group $\CG$ has the  \emph{generic property with respect to
 $\mathcal{M}$} if every definably generated subgroup of $\CG$ contains a definable generic set.
 We omit the reference to $\mathcal{M}$ if it is clear from the context (see Remark
\ref{rmkgenprop}).
\end{defn}

The main result of \cite{EP} can be stated as follows.

\begin{fact}\label{EP}Let $\mathcal{R}$ be an $\aleph_1$-saturated o-minimal expansion of a real closed field $R$.
Then $\langle R^n, + \rangle$ has the generic property with respect to $\cal R$.
\end{fact}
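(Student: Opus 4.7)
My plan is to produce, for each definable $X \sub R^n$ generating a subgroup $\CU \le (R^n,+)$, a definable generic subset of $\CU$ --- equivalently, by Fact \ref{Thm21}, a locally definable lattice $\Gamma \subset \CU$ with $\CU/\Gamma$ definable. By \cite[Claim 3.11]{EP2} I may assume $X$ is definably compact, and after replacing it by $X \cup (-X) \cup \{0\}$ I also take $X = -X$ and $0 \in X$. Writing $mX$ for the $m$-fold Minkowski sum $X + \cdots + X$, we have $\CU = \bigcup_m mX$, and o-minimal dimension theory produces $m_0$ and $d$ with $\dim(mX) = d$ for every $m \ge m_0$. Let $V = \mathrm{span}_R(X) \le R^n$, a definable $R$-subspace of some dimension $k \ge d$; note $\CU \sub V$.

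If $d = k$, then $m_0 X$ has nonempty interior in $V$, so for some $p \in m_0 X$ the set $m_0 X - p \sub 2 m_0 X \sub \CU$ is an open $V$-neighborhood of $0$. Hence $\CU = V$, and the definable box $[-1,1]^k$ in linear coordinates of $V$ is a generic set whose countable $\Z^k$-translates exhaust $V$.

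Suppose now $d < k$. The strategy is to produce a locally definable $R$-subspace $V_0 \sub \CU$ of dimension exactly $d$, and to show that the image $\pi(\CU) \sub V/V_0 \cong R^{k-d}$ under the canonical projection is discrete. Granting this, $\pi(\CU)$ is free abelian of rank exactly $k-d$ (since $\CU$ $R$-spans $V$), so one may pick $R$-linearly independent $v_1, \dots, v_{k-d} \in \CU$ whose images generate $\pi(\CU)$ over $\Z$, and then locate a definable bounded set $Y \sub V$ such that $\CU = Y + V_0 + \Z v_1 + \cdots + \Z v_{k-d}$. This $Y$ is the required generic set, and $\Gamma := V_0 + \Z v_1 + \cdots + \Z v_{k-d}$ realises the lattice in Fact \ref{Thm21}.

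The main obstacle is constructing $V_0$ and verifying discreteness of $\pi(\CU)$. The natural candidate for $V_0$ is the $R$-linear span of an open $d$-cell in $m_0 X$; inclusion $V_0 \sub \CU$ has to be forced from the stabilization $\dim(mX) = d$, which morally means that enough cancellation within $\CU$ occurs to produce a genuine translation-closed $d$-dimensional locus. Discreteness of $\pi(\CU)$ is where the hypothesis of $\aleph_1$-saturation becomes essential: any element of $\CU$ infinitesimally close to $V_0$ but outside it should, via a saturation-based compactness argument on the family $\{mX\}_m$, yield an additional $R$-direction that could be absorbed into $V_0$, contradicting its maximality. Finite generation of $\pi(\CU)$ would then follow from the definable compactness of $X$ together with o-minimal dimension bounds, since an unbounded $R$-linearly independent sequence of images would force $\dim(mX) > d$ for large $m$, contradicting stabilization.
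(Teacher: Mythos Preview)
This statement is not proved in the present paper; it is quoted as the main result of \cite{EP}. So there is no in-paper argument to compare against, and your proposal must stand on its own.

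Your overall strategy---find a maximal $R$-subspace $V_0\sub\CU$ of dimension $d=\dim(\CU)$ and show that $\CU/V_0$ is discrete and finitely generated---is indeed the shape of the argument in \cite{EP}. But your write-up stops precisely where the substance begins. You explicitly flag the two hard points, (i) $V_0\sub\CU$ and (ii) discreteness of $\pi(\CU)$, and then offer only heuristics for each. Taking $V_0$ to be the $R$-span of an open $d$-cell $C\sub m_0X$ does not work as stated: a $d$-cell in an o-minimal structure need not be flat, so its linear span may have dimension strictly larger than $d$, and even if $C$ happened to lie in an affine $d$-plane there is no reason that the full line through two points of $C$ stays inside $\CU$. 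The phrase ``morally means that enough cancellation within $\CU$ occurs'' is exactly the non-trivial content of the theorem; establishing that $\CU$ contains \emph{any} positive-dimensional subspace requires real work (in \cite{EP} this is done via an inductive argument on $n$ together with a careful analysis of the boundary behaviour of the sets $mX$). Your discreteness sketch invokes ``maximality'' of $V_0$, but you never proved $V_0$ is maximal---you chose it as a span, not as a largest subspace contained in $\CU$---so the purported contradiction has nothing to contradict.

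There is also a small slip at the end: your $\Gamma=V_0+\Z v_1+\cdots+\Z v_{k-d}$ has dimension $d$, so it is not a lattice in the sense of the paper (which requires $\dim(\Gamma)=0$). The intended lattice is $\Z v_1+\cdots+\Z v_{k-d}$ alone, with quotient $\CU/\Gamma\cong V_0$.
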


Our first result, in Section \ref{sec:extensions}, is that the generic property can
be lifted under the presence of an exact sequence (Theorem \ref{t:Thmexten}).

\begin{thm*}Let $\mathcal{M}$ be an o-minimal structure. Assume that we are given an exact sequence of abelian locally definable groups and maps:
$$
\begin{diagram}
\node{0}\arrow{e}\node{\mathcal H} \arrow{e,t}{i}\node{\mathcal G}
\arrow{e,t}{\pi}\node{\mathcal V} \arrow{e} \node{0}
\end{diagram}
$$
If $\CV$ and $\CH$ have the generic property, then so does $\CG$.
\end{thm*}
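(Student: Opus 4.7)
Let $\CU = \langle X \rangle \leq \CG$ be a definably generated subgroup; the goal is to exhibit a definable generic set in $\CU$. After passing to the identity component we may assume $\CU$ is connected, and by \cite[Claim 3.11]{EP2} we may take $X$ definably compact. Then $\pi(\CU) = \langle \pi(X) \rangle$ is a connected definably generated subgroup of $\CV$, so the generic property of $\CV$ together with Fact~\ref{Thm21} yields a lattice $\Gamma_\CV \cong \Z^k$ in $\pi(\CU)$ with definable quotient $G_1 := \pi(\CU)/\Gamma_\CV$. Set $\CK := \CU \cap i(\CH)$, lift a basis of $\Gamma_\CV$ to elements $\tilde\gamma_1, \dots, \tilde\gamma_k \in \CU$, and let $\Lambda := \langle \tilde\gamma_1, \dots, \tilde\gamma_k\rangle$. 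The composition $\Z^k \twoheadrightarrow \Lambda \twoheadrightarrow \Gamma_\CV \cong \Z^k$ is a surjective endomorphism of $\Z^k$, hence an isomorphism; consequently $\Lambda \cong \Z^k$ and $\Lambda \cap \CK = 0$, so that the composite $\pi' : \CU \to G_1$ is a locally definable surjection with kernel $\CK \oplus \Lambda$.

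The central and most delicate step is showing that $\CK$ itself is definably generated, so that the generic property of $\CH$ may be applied to it. Using definable choice inside a large enough $X_m := \sum_{i=1}^m (X - X)$ with $\pi'(X_m) = G_1$, produce a definable transversal $D \subseteq X_m$ with $\pi'|_D : D \to G_1$ a bijection and $0 \in D$. A Schreier-type argument then exhibits
\[
S := \{\, x + d - (\pi'|_D)^{-1}(\pi'(x + d)) \,:\, x \in X \cup (-X),\; d \in D \,\} \;\subseteq\; \CK \oplus \Lambda
\]
as a definable generating set for $\ker\pi' = \CK \oplus \Lambda$. Observe now that $\pi(S) \subseteq \Gamma_\CV$ is a definable subset of the discrete group $\Gamma_\CV$, and hence is finite; partitioning $S$ into the finitely many preimages under $\pi$, the projection $\CK \oplus \Lambda \to \CK$ (subtracting the $\Lambda$-component) becomes definable on $S$, yielding a definable generating set $p(S) \subseteq \CK$ for $\CK$.

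With $\CK$ now known to be definably generated, the generic property of $\CH$ produces a definable generic $Y_\CK \subseteq \CK$. In parallel, the definable generic $Y \subseteq \pi(\CU)$ obtained earlier lifts, via a definable section of $\pi|_{X_n}$ for some $n$ with $Y \subseteq \pi(X_n)$, to a definable $\tilde Y \subseteq \CU$ with $\pi(\tilde Y) = Y$. Fixing countable sets $B, B'$ with $\pi(\CU) = B + Y$ and $\CK = B' + Y_\CK$, and a countable lift $\tilde B \subseteq \CU$ of $B$, any $u \in \CU$ writes as $u = \tilde b + \tilde y + k$ with $\tilde b \in \tilde B$, $\tilde y \in \tilde Y$, $k \in \CK$, and further $k = b' + y_\CK$ with $b' \in B'$, $y_\CK \in Y_\CK$. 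Hence $\CU = (\tilde B + B') + (\tilde Y + Y_\CK)$, so $\tilde Y + Y_\CK$ is a definable generic in $\CU$. The principal obstacle, as indicated, is the Schreier step: $\CK$ is a priori only locally definable, and carving out a single definable generating set relies on the definability of $G_1$, the availability of a definable transversal $D$, and the finiteness of $\pi(S) \subseteq \Gamma_\CV$.
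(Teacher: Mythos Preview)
Your proof is correct and follows the same global strategy as the paper: reduce to a connected $\CU=\langle X\rangle$, use the generic property of $\CV$ to obtain a lattice in $\pi(\CU)$, prove that $\CK=\CU\cap i(\CH)$ is definably generated, apply the generic property of $\CH$ to $\CK$, and combine the two generic pieces. The place where your argument genuinely diverges from the paper is the technical core, namely the proof that $\CK$ is definably generated.

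The paper proceeds in two stages (Propositions~\ref{propExt} and~\ref{p:thmExt}). First, fixing a definable generic $Y\subseteq\CV$ and a section $s:Y\to\CG$, it lifts generators of the lattice $\Gamma$ to a subgroup $\Delta$, isolates the finite set $\Delta_0=\{\delta\in\Delta:\pi(\delta)\in Y(2)\}$, and shows by an explicit ``pairing'' induction on $2^n$-fold sums that the definable set $D=(\Delta_0+s(Y)(2))\cap\CH$ generates $\langle s(Y)\rangle\cap\CH$. A second short step adds the correction term $E=X(1)\cap\CH$ to pass from $\langle s(Y)\rangle\cap\CH$ to $\langle X\rangle\cap\CH$. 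Your route instead treats the whole problem at once by passing to the \emph{definable} quotient $G_1=\pi(\CU)/\Gamma_\CV$, choosing a definable transversal $D\subseteq X_m$ for $\pi':\CU\to G_1$, and invoking Schreier's lemma to produce a definable generating set $S$ for $\ker\pi'=\CK\oplus\Lambda$; the finiteness of $\pi(S)\subseteq\Gamma_\CV$ (a definable subset of a $0$-dimensional compatible subgroup) then makes the projection to $\CK$ definable on $S$. This is a clean, more structural alternative to the paper's hands-on induction; it has the virtue of packaging the combinatorics into a single classical lemma and avoiding the two-stage decomposition, at the cost of relying a little more heavily on definable choice to produce the transversal (a tool the paper also uses in Proposition~\ref{p:thmExt}). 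Either argument yields the same conclusion with comparable effort.
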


This is a useful criterion that can be applied inductively in certain situations. As
a corollary, we prove (Subsection \ref{subsec:appl}) the following theorem.

\begin{thm*} Let $\CM$ be an $\aleph_1$-saturated o-minimal structure. \begin{enumerate}
\item If $G$ is a definable abelian torsion-free group, then $G$ has the generic
property. \item If $\CM$ expands a real closed field $R$ and $G\sub Gl(n,R)$ is a definable abelian linear group, then $G$
has the generic property.
\end{enumerate}
\end{thm*}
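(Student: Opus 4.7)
Our approach is to apply the extension theorem (Theorem \ref{t:Thmexten}) iteratively, after decomposing $G$ into pieces whose generic property is either already known or can be verified directly via Fact \ref{EP}.

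Part (1) proceeds by induction on $d = \dim G$. In the base case $d = 1$, the classification of one-dimensional definable abelian groups in o-minimal structures (via the Peterzil--Starchenko trichotomy) shows that a torsion-free such $G$ is definably isomorphic to the additive group of either an interpretable real closed field or an ordered divisible abelian group living in a linear reduct; since $\aleph_1$-saturation is inherited by interpretable reducts, Fact \ref{EP} applies in the former case, and a direct semilinear verification handles the latter. For the inductive step we invoke the structure theory of definable torsion-free abelian groups to produce a one-dimensional definable subgroup $H \leq G$ with $G/H$ still torsion-free and of dimension $d-1$; applying the extension theorem to $0 \to H \to G \to G/H \to 0$ together with the inductive hypothesis on $G/H$ concludes.

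Part (2) begins by reducing to the definably connected component $G^0$ via the extension theorem (with the finite quotient $G/G^0$ trivially handled). The Jordan decomposition in $Gl(n, R)$, which is compatible with the group law in the abelian setting, writes $G^0$ as an extension $0 \to U \to G^0 \to T \to 0$, where $U$ is the unipotent part (definable, connected, torsion-free, so covered by (1)) and $T$ is the semisimple part. The semisimple $T$ further decomposes (possibly via an isogeny through a quadratic extension of $R$) as $T = T_s \cdot T_c$, with $T_s$ a split torus whose connected component is torsion-free (conjugate into $R_{>0}^m$, hence again covered by (1)) and $T_c$ an anisotropic, definably compact torus. Applying the extension theorem piece by piece reduces the problem to establishing the generic property for $T_c$.

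The main obstacle is therefore the definably compact factor $T_c$. Here we appeal to the structure of definably compact abelian groups in $\aleph_1$-saturated o-minimal expansions of real closed fields: such $T_c$ admits a locally definable universal cover $\pi \colon \widetilde{T_c} \twoheadrightarrow T_c$ with $\widetilde{T_c}$ a definably generated, torsion-free, divisible locally definable abelian group and kernel a lattice $\Lambda \cong \Z^{\dim T_c}$. Given a definably generated $\CU \leq T_c$, one lifts to a definably generated subgroup of $\widetilde{T_c}$ (using a definable preimage of the generator together with a cofinite piece of $\Lambda$), extracts a definable generic set via Fact \ref{EP} applied within $\widetilde{T_c}$, and then projects back to $T_c$ to obtain the required generic for $\CU$. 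A subsidiary technical subtlety in part (2) is that making the Jordan decomposition into honest exact sequences of \emph{definable} groups requires a careful simultaneous-trigonalization argument, handled by descending through an isogeny from a finite field extension.
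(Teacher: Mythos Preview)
Your overall architecture (induction on dimension plus repeated use of Theorem~\ref{t:Thmexten}) matches the paper, and your inductive step in Part~(1) is exactly the paper's: find a one-dimensional subgroup $H$ via \cite{PSe} and apply the extension theorem to $0\to H\to G\to G/H\to 0$. The substantive differences, and a genuine gap, are in the base cases.

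For the one-dimensional base case of Part~(1), the paper does \emph{not} invoke the trichotomy or Fact~\ref{EP}. Instead it proves Lemma~\ref{lem:one}: any one-dimensional torsion-free \emph{locally} definable group $\CU$ can be linearly ordered (by \cite[Corollary 8.3]{ed1}), and then for a symmetric interval $(-b,b)$ one checks directly that $\mathbb{Z}b$ is a lattice in $\langle(-b,b)\rangle$. This is both shorter than a trichotomy analysis and, crucially, applies to locally definable groups---a point that becomes essential in Part~(2). Your trichotomy route, even if it can be made to work, only treats definable $G$, and the ``direct semilinear verification'' you allude to is essentially the content of Lemma~\ref{lem:one} in any case.

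The real gap is in Part~(2). You write that for the compact factor $T_c$ one lifts to the universal cover $\widetilde{T_c}$ and then ``extracts a definable generic set via Fact~\ref{EP} applied within $\widetilde{T_c}$.'' But Fact~\ref{EP} is a statement about $\langle R^n,+\rangle$, not about arbitrary torsion-free locally definable groups; it applies to $\widetilde{T_c}$ only if you first exhibit $\widetilde{T_c}$ as a locally definable subgroup of some $\langle R^n,+\rangle$, which you have not done (and which, for general definably compact abelian groups, is precisely the hard content of results like Fact~\ref{AbelianTorus}). Nor can you invoke Part~(1), since $\widetilde{T_c}$ is only locally definable. The paper sidesteps this entirely: it quotes \cite[Fact~3.1 and Proposition~3.10]{PPS} to get a \emph{product} decomposition $G\cong SO(2,R)^m\times (R^*_{>0})^k\times (R^+)^n$, so the only compact piece is $SO(2,R)$ itself, whose universal cover is one-dimensional torsion-free and hence handled by Lemma~\ref{lem:one}. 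Your Jordan-decomposition route and the ``simultaneous-trigonalization'' subtlety you flag are thus unnecessary once one uses the \cite{PPS} structure theorem, and without that theorem (or an explicit embedding of $\widetilde{T_c}$ into $\langle R^n,+\rangle$) your argument for the compact factor is incomplete.
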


In Section \ref{sec:semigr}, we apply the above lifting result to study definably generated subgroups of semialgebraic
 groups.  In order to formulate the next result, recall that $\mathbb R_{an,exp}$ is
 the expansion of the real field by the real exponential map and all restrictions of real analytic functions
 to the closed unit box in $\mathbb R^n$, for all $n\in \mathbb N$. By \cite{DM}, it is o-minimal.
 The following theorem is the  main result of the paper (Theorem \ref{t:mainsemi}), which generalizes Fact \ref{EP} above.

\begin{thm*}Let $\CR$ be an $\aleph_1$-saturated o-minimal expansion of a real closed field $R$ such that the theory $Th(\mathcal R)\cup Th(\mathbb R_{an,exp})$ is consistent and has an o-minimal completion.
 Then any abelian $R$-semialgebraic group $G$ has the generic property with respect to $\mathcal R$.
In particular, any semialgebraically generated subgroup of
$G$ contains a semialgebraic generic set.
\end{thm*}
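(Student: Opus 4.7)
The approach is to apply the exact-sequence lifting theorem (Theorem \ref{t:Thmexten}) to a structural decomposition of $G$. First, I would reduce to the case $G$ is definably connected: the exact sequence $0 \to G^{0} \to G \to G/G^{0} \to 0$ has finite, and hence trivially generic-property, quotient, so by Theorem \ref{t:Thmexten} the problem reduces to $G^{0}$.

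Next, using the known structure theorem for definably connected abelian groups in o-minimal expansions of real closed fields, I would obtain an exact sequence of definable groups
$$
0 \to T \to G^{0} \to C \to 0,
$$
in which $T$ is a torsion-free definable subgroup and $C$ is definably compact. Since $T$ is torsion-free, abelian and definable, it has the generic property by the corollary already stated in the introduction. By Theorem \ref{t:Thmexten} it therefore suffices to show that the definably compact, abelian, semialgebraic group $C$ has the generic property with respect to $\CR$.

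This definably compact case is the crux of the proof, and it is here that the hypothesis on $\mathbb{R}_{an,exp}$ enters. Fix a model $\CM$ of an o-minimal completion of $Th(\CR) \cup Th(\mathbb R_{an,exp})$, so that $\CM$ simultaneously has access to the restricted analytic functions and the exponential, while being elementarily connected to $\CR$. In $\CM$ one can realize the universal cover of $C$ as a locally definable surjection $\pi \colon R^{k} \to C$ whose kernel is a lattice $\Lambda \cong \Z^{k}$. Given an $\CR$-definably generated subgroup $\CU \le C$, the preimage $\pi^{-1}(\CU)$ is $\CM$-definably generated in $R^{k}$ (using that $\Lambda$ is finitely generated), so by Fact \ref{EP} applied inside $\CM$ it contains an $\CM$-definable generic set $Y$, whose image $\pi(Y)$ is generic in $\CU$; equivalently, by Fact \ref{Thm21}, $\CU$ admits a $\Z^{k}$-lattice in $\CM$.

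The main obstacle is the transfer of this conclusion back to $\CR$, since the generic set is a priori only $\CM$-definable. Here I would use the equivalence in Fact \ref{Thm21}(3), expressing the existence of a lattice as a condition on finitely many generators together with a definable homomorphism to a definable quotient, and combine the $\aleph_{1}$-saturation of $\CR$ with elementary equivalence inside the common completion in order to descend such data to $\CR$. Building $\pi$ from the analytic structure on $\CM$, checking its compatibility with $\CR$-definably generated subgroups, and carrying out this descent are expected to be the technically delicate parts of the argument.
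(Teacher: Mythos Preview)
Your reduction to the definably compact case via the exact sequence $0\to T\to G^0\to C\to 0$ matches the paper exactly, and the idea of passing to a saturated model of the common o-minimal completion is also how the paper proceeds (the transfer back to $\CR$ is in fact painless, via the first-order reformulation of the generic property in Remark~\ref{rmkgenprop}(1),(4), so your worries about descending lattice data are unnecessary). The gap is in the sentence ``In $\CM$ one can realize the universal cover of $C$ as a locally definable surjection $\pi\colon R^{k}\to C$ whose kernel is a lattice.'' This is precisely the hard content of the theorem, not a known input. For an arbitrary definably compact abelian \emph{semialgebraic} group $C$ over a possibly non-standard $R$, there is no off-the-shelf result saying that its locally definable universal cover is a subgroup of $\langle R^{k},+\rangle$; over the standard reals one can invoke Lie theory to see $C$ is a torus, but that argument is not uniform in the defining parameters and so does not transfer to non-standard models.

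The paper supplies exactly this missing step, and it is the bulk of the work: one first uses Barriga's structure theorem (Fact~\ref{barriga}) to realise $C$ as a locally definable quotient of an open subgroup $\mathcal W$ of $\widetilde{H(R)^{\mathrm o}}$ for an algebraic group $H$ over $K=R(i)$; then Chevalley's theorem decomposes $H$ as an extension of an abelian variety $A$ by a linear group $L$; the linear part has the generic property by Corollary~\ref{cor:linear}, and the abelian-variety part is handled by Fact~\ref{AbelianTorus} (the Peterzil--Starchenko definability of theta functions), which is what actually produces a locally definable covering of $A$ by a subgroup of $\langle R^{g},+\rangle$. Repeated applications of Theorem~\ref{t:Thmexten} then assemble these pieces. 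Your proposal collapses this entire chain into an unjustified assertion; to repair it you would need to supply an independent argument for the covering $R^{k}\to C$, and none is known that avoids the Barriga/Chevalley/theta-function route.
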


A special case of the above result is when $\mathcal R$ is elementarily equivalent to $\R_{an,exp}$.

\medskip

A crucial key case of the above theorem is when $G$ is an  abelian variety. In \cite{ps2} the authors prove the definability in
 $\mathbb{R}_{\text{an,exp}}$, on appropriate domains, of embeddings of families of abelian varieties into projective spaces.
 From those results it is possible to extract the following non-standard property of abelian
 varieties.
\begin{fact}\cite{ps2}\label{AbelianTorus} Let $\mathcal{R}=\langle R, \ldots \rangle$ be a model of $Th(\mathbb R_{\text{an,exp}})$
and let $A\sub \mathbb{P}^N(K)$, $K=R(i)$, be an embedded abelian variety of dimension $g$. Then there exist a locally
 definable subgroup $\CG$ of $\la R^g, +\ra$ and a locally definable covering homomorphism $p:\CG \rightarrow A$.
\end{fact}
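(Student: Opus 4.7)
The plan is to extract this statement from the definability results of \cite{ps2} on theta embeddings of families of abelian varieties. Recall first the classical picture over $R=\R$: a polarized abelian variety $A\sub \PP^N(\C)$ of dimension $g$ is analytically $\C^g/\Lambda$ for a lattice $\Lambda$ of real rank $2g$, and the projective embedding is realized by a basis of theta functions $\theta_0,\dots,\theta_N$ that are holomorphic on $\C^g$ and satisfy a functional equation $\theta_i(z+\lambda)=c_i(z,\lambda)\theta_i(z)$ for $\lambda\in\Lambda$, with automorphy factors $c_i$ expressible through the real exponential.

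The key input from \cite{ps2} is that, for a suitable choice of real coordinates and of a bounded fundamental domain $F\sub\C^g$, the map $\Theta(z)=[\theta_0(z):\cdots:\theta_N(z)]$ restricted to $F$ is definable in $\R_{\mathrm{an,exp}}$, and the transformation data describing $\Theta(z+\lambda)$ in terms of $\Theta(z)$ for $\lambda$ in a fixed generating set of $\Lambda$ is also definable in $\R_{\mathrm{an,exp}}$, uniformly in the parameters of $A$. It is this uniformity that passes, via $Th(\R_{\mathrm{an,exp}})$, to the nonstandard model $\CR$: we obtain a definable bounded set $F\sub R^{2g}$, a definable map $\Theta\colon F\to \PP^N(K)$ whose image lies in $A$ and surjects onto $A$, and finitely many definable ``translation maps'' that play the role of the action of a basis of $\Lambda$.

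I would then build $\CG$ as the subgroup of $(R^{2g},+)$ generated by $F$ together with the finitely many ``lattice vectors'' coming from the definable translation data. This is a countable directed union of definable sets, hence a locally definable subgroup of $\la R^{2g},+\ra$ (identifying $K^g\cong R^{2g}$ as additive group, which matches the $R^g$ of the statement once one reads $R$ as the $2$-fold product). Using the functional equation for theta, transferred to $\CR$, one extends $\Theta$ step by step across each $F$-translate inside $\CG$, producing a locally definable map $p\colon \CG\to A$ which is a homomorphism by the addition formula for the theta embedding (equivalently, because the image inherits the group law of $A$). Surjectivity holds because already $\Theta(F)=A$, and $\ker(p)$ is zero-dimensional since its intersection with each bounded definable piece is finite; hence $p$ is a covering homomorphism in the sense required.

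The main obstacle is organizational rather than computational: the lattice $\Lambda$ has no direct analog in $\CR$, so every statement about $p$ and $\CG$ must be phrased purely in terms of $F$ and the definable transition maps produced by \cite{ps2}. In particular, one must verify that the extension of $\Theta$ across $\CG$ is independent of the chosen sequence of lattice translations used to reach a given element, which reduces to a consistency (cocycle) check for the functional equation inside $\CR$. This consistency is precisely the point at which the content of the theta-function analysis of \cite{ps2} is invoked; everything else is bookkeeping with countable unions of definable sets.
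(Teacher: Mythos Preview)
Your approach is workable in outline, but the paper takes a cleaner route that sidesteps the cocycle check you flag as the main obstacle. Rather than building the cover of $A$ directly from a fundamental domain plus transition data, the paper extracts from \cite{ps2} a stronger statement (Fact~\ref{theta}): for each polarization type $D$ there is an $\mathbb R_{an,exp}$-definable family of group \emph{isomorphisms} between the members of the Baily family $\mathcal{A}^D$ and the fixed semialgebraic torus $\mathbb{T}^{2g}=([0,1),+\bmod 1)^{2g}$. Once transferred to $\mathcal{R}$, this yields a definable group isomorphism $A\cong \mathbb{T}^{2g}$, and the cover is then the obvious one: the locally definable subgroup $\bigcup_{n}[-n,n]^{2g}$ of $\la R^{2g},+\ra$ surjects onto $\mathbb{T}^{2g}$ by coordinatewise reduction $\bmod\,1$. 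No extension of $\Theta$ across translates and no consistency check are needed; your construction is in effect rebuilding this isomorphism by hand.

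There is also a step you pass over that the paper handles explicitly. The definability results of \cite{ps2} concern the countably many moduli families $\mathcal{A}^D$, not an arbitrary embedded $A\sub\mathbb{P}^N(K)$ over a nonstandard $K$. The paper first places $A$ inside a parameter-free constructible family $\{A_t:t\in T\}$, then uses $\aleph_1$-saturation of $\mathbb{C}$ (Proposition~\ref{p:finPol}) to show that over $\mathbb{C}$ this family is absorbed, via a constructible family of bounded-degree algebraic isomorphisms, into finitely many $\mathcal{A}^{D_j}$; only then is there a first-order statement to transfer to $\mathcal{R}$. Your phrase ``uniformly in the parameters of $A$'' hides precisely this reduction. (Your parenthetical about $R^g$ versus $R^{2g}$ is on point: the paper's own argument lands in $R^{2g}$.)
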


For the sake of completeness, we provide a proof of the above fact in Appendix \ref{appendix:abel}. Another important ingredient is the work of E. Barriga on semialgebraic groups, \cite{B}, which we recall in Fact \ref{barriga}.

\subsection{The connection to approximate subgroups}

Approximate subgroups have been studied extensively in various fields including model theory, see for example \cite{Br} and \cite{Hr}.

\begin{defn}
Given a group $G$, and $k\in \mathbb N$,  a set $X\sub G$ is called a {\em $k$-approximate group} if   $X=X^{-1}$ and
there is a finite set $A\sub G$ of cardinality $k$ such that $X\cdot X\sub A\cdot X$. We say that $X$ is an {\em approximate group} if it is $k$-approximate for some $k\in \N$.
\end{defn}

As we observe in Remark \ref{rmkgenprop} below, the existence of a generic set inside a definably generated group $\langle X\rangle\sub G$ is equivalent to saying
that there exists an $m$ such that the set $X(m)$ (the addition of $X-X$ to itself $m$ times) is an approximate group. Thus  our various results and conjectures can be re-formulated in the language of approximate subgroups. For example, Conjecture \ref{conj1} can be re-formulated as follows.

\begin{conj}\label{conj2} Let $\CU$ be a locally definable abelian group in an o-minimal structure and $X\sub \CU$ a definable set.
 Then there exists $m\in \mathbb N$
such that $X(m)$ is an approximate group.
\end{conj}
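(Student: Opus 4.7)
The plan is to reduce Conjecture~\ref{conj2} to Conjecture~\ref{conj1} via the equivalence between ``$X(m)$ is an approximate group for some $m$'' and ``$\langle X\rangle$ contains a definable generic set'', and then, having made the reduction, to produce such a generic set by applying Theorem~\ref{t:mainsemi} and the lifting result Theorem~\ref{t:Thmexten}. Throughout assume $X$ is symmetric and contains $0$, so that $X(m)=mX$ is symmetric, contains $0$, and $\langle X\rangle = \bigcup_m X(m)$ is an increasing definable exhaustion of $\langle X\rangle$.

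For the equivalence, in one direction suppose $X(m)\cdot X(m) \subseteq A\cdot X(m)$ for a finite $A$; since $\CU$ is abelian, iterating gives $X(m)^n \subseteq A^{n-1}\cdot X(m)$, so $\langle X\rangle \subseteq \langle A\rangle\cdot X(m)$ with $\langle A\rangle$ countable, showing $X(m)$ is definable generic. Conversely, if $Y$ is definable generic in $\langle X\rangle$ with $B\cdot Y = \langle X\rangle$ for some countable $B$, then (passing to an $\aleph_1$-saturated elementary extension if necessary) two saturation arguments apply: the first, to the decreasing definable family $\{Y\setminus X(m)\}_m$ with empty intersection, yields an $m_0$ with $Y\subseteq X(m_0)$; the second, to the countable family $\{X(2m_0)\setminus B_0\cdot Y : B_0\subseteq B \text{ finite}\}$ with empty intersection, yields a finite $B_0\subseteq B$ with $X(2m_0)\subseteq B_0\cdot Y$. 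Combining, $X(m_0)\cdot X(m_0) = X(2m_0) \subseteq B_0\cdot Y \subseteq B_0\cdot X(m_0)$, so $X(m_0)$ is an approximate group.

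With the equivalence in hand, replace $\CU$ by $\langle X\rangle$ and, by Claim~3.11 of \cite{EP2}, assume further that $X$ is definably compact. By Fact~\ref{Thm21}, the remaining task is to construct a lattice in $\langle X\rangle$. If $\langle X\rangle$ sits inside an abelian semialgebraic group over $R$ in an o-minimal setting satisfying the hypotheses of Theorem~\ref{t:mainsemi}, that theorem produces the lattice directly. In general, embed $\langle X\rangle$ into a definable abelian group $G$, consider the maximal torsion-free definable subgroup $T \leq G$ together with the exact sequence $0\to T\to G\to G/T\to 0$ (whose quotient is definably compact), and apply Theorem~\ref{t:Thmexten} inductively: the torsion-free corollary handles $T$, while the definably compact quotient is reduced to an abelian-variety presentation along the lines of Fact~\ref{AbelianTorus}, producing a lattice in the universal cover that pulls back to a lattice in $\langle X\rangle$.

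The main obstacle is this last reduction. Fact~\ref{AbelianTorus}---the locally definable realisation of the universal cover of an abelian variety as a subgroup of $\langle R^g, +\rangle$---rests essentially on the analytic machinery available in $\mathbb{R}_{\text{an,exp}}$, and the plan's final step is to supply either an analogue of Fact~\ref{AbelianTorus} valid in arbitrary o-minimal expansions of real closed fields, or a direct construction of a lattice in $\langle X\rangle$ that bypasses the universal cover of the definably compact quotient altogether. Either route, combined with the reductions above, closes the induction and delivers Conjecture~\ref{conj2} in full.
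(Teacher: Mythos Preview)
The statement you are attempting to prove is labelled \emph{Conjecture} in the paper, and for good reason: the paper does not prove it. The paper's only claim about Conjecture~\ref{conj2} is that it is a reformulation of Conjecture~\ref{conj1}, via Remark~\ref{rmkgenprop}(1). Your first two paragraphs correctly carry out this equivalence (and in fact spell it out more explicitly than the paper does), so up to that point you are in line with what the paper actually establishes.

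Everything after that is an attempt to prove Conjecture~\ref{conj1} in full generality, which the paper does not do and which remains open. Your outline contains a genuine gap beyond the one you flag. The sentence ``In general, embed $\langle X\rangle$ into a definable abelian group $G$'' is the crux of the problem: an arbitrary locally definable abelian group in an o-minimal structure need not be a subgroup of any definable group, and the paper's results (Theorem~\ref{t:mainsemi}, Corollary~\ref{torsionfree}, Corollary~\ref{cor:linear}) are precisely statements about the special situation where such an ambient definable group is given in advance, with additional algebraic structure. Without that embedding, the exact sequence $0\to T\to G\to G/T\to 0$ you invoke is not available, and the inductive scheme collapses before you ever reach the abelian-variety step. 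The obstacle you identify in your final paragraph (extending Fact~\ref{AbelianTorus} beyond models of $T_{\text{an,exp}}$) is real as well, but it is downstream of the embedding problem; even granting a general analogue of Fact~\ref{AbelianTorus}, you would still need the ambient definable group to run the argument. So the proposal is not a proof but a wish list whose first substantive item is already unjustified.
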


Our main result above (Theorem \ref{t:mainsemi}) easily implies the following uniformity statement:
\begin{thm} \label{uniformity} Let $\mathcal R=\langle \mathbb R, <,+,\cdot, \dots\rangle$ be an o-minimal expansion of $\mathbb R_{an,exp}$. Let $\{G_t: t\in T\}$ an $\mathbb R_{an,exp}$-definable family of semialgebraic abelian groups, and  $\{X_t:t\in T\}$ an  $\mathcal R$-definable family, with each $X_t\sub G_t$. Then there is  $k\in \mathbb N$,
such that for every $t\in T$,
the set $X_t(k)$ is a $k$-approximate subgroup of $G$.
\end{thm}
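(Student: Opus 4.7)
The plan is to reduce the statement to a pointwise application of Theorem~\ref{t:mainsemi} via a standard compactness/saturation argument. By Remark~\ref{rmkgenprop}, for a definable subset $X$ of an abelian locally definable group, $\la X\ra$ contains a definable generic set if and only if some iterated sumset $X(m)$ is an approximate subgroup, i.e.\ there exist $j,\ell\in\N$ such that $X(j)$ is an $\ell$-approximate subgroup. To get a single-integer reformulation suitable for compactness, I introduce, for each fixed $k\in\N$, the \emph{monotone} condition
\[
P_k(t)\ \equiv\ \exists\, j,\ell\in\{1,\ldots,k\},\ \exists\, a_1,\ldots,a_\ell\in G_t:\ X_t(j)+X_t(j)\ \sub\ \bigcup_{i=1}^\ell (a_i+X_t(j)),
\]
which is $\CR$-definable uniformly in $t$ and is clearly increasing in $k$. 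The theorem is equivalent to asserting the existence of some $k$ with $P_k(t)$ true for every $t\in T$.

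Next I would argue by contradiction: assume that no such uniform $k$ exists, so that for every $k\in\N$ the $\CR$-definable set
$$B_k\ :=\ \{t\in T:\neg P_k(t)\}$$
is non-empty. By monotonicity of $P_k$ in $k$, the family $(B_k)_{k\in\N}$ is decreasing and hence has the finite intersection property. Passing to an $\aleph_1$-saturated elementary extension $\CR^*\succ\CR$, saturation yields a point $t^*\in T(\CR^*)$ lying in every $B_k(\CR^*)$; equivalently, no iterated sumset $X_{t^*}(j)$ is an approximate subgroup of $G_{t^*}$.

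Now $\CR^*$ is an $\aleph_1$-saturated o-minimal expansion of a real closed field $R^*$, and since $\CR$ expands $\R_{\mathrm{an,exp}}$, so does $\CR^*$, so the consistency/completion hypothesis of Theorem~\ref{t:mainsemi} is automatic. Moreover $G_{t^*}$ is an $R^*$-semialgebraic abelian group and $X_{t^*}\sub G_{t^*}$ is $\CR^*$-definable. Applying Theorem~\ref{t:mainsemi} to the pair $(G_{t^*},X_{t^*})$ produces a semialgebraic generic set inside $\la X_{t^*}\ra$, so by Remark~\ref{rmkgenprop} some $X_{t^*}(j)$ is an approximate subgroup of $G_{t^*}$ --- a contradiction. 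Hence a uniform $k$ exists in $\CR^*$, and being witnessed by a specific standard integer it transfers back to $\CR$ by elementary equivalence.

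The only (mild) obstacle I foresee is the bookkeeping behind the reformulation ``generic set $\Leftrightarrow$ some $X(j)$ is an $\ell$-approximate subgroup $\Leftrightarrow$ some $P_k(t)$ holds''; this rests on the standard monotonicity features of approximate groups (enlarging either the sumset index or the approximation constant only makes the property easier to satisfy, with a controlled growth of the constants under iterated doubling). Modulo that routine check, the whole argument is a textbook saturation transfer from the main theorem.
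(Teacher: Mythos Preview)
Your compactness/saturation transfer is exactly what the paper intends when it says Theorem~\ref{t:mainsemi} ``easily implies'' this result; the paper gives no further argument, so your outline \emph{is} essentially the omitted proof.

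The one step that deserves an extra word is your assertion that $G_{t^*}$ is an $R^*$-semialgebraic abelian group. Knowing only that each real fibre $G_t$ is semialgebraic does not automatically transfer to a nonstandard fibre: if the semialgebraic complexity of the $G_t$ were unbounded over $T(\R)$, a nonstandard $G_{t^*}$ could fail to be semialgebraic over $R^*$, and then Theorem~\ref{t:mainsemi} would not apply to it. What is needed (and what the hypothesis should be read as providing) is a uniform bound on complexity, equivalently the existence of a semialgebraic family $\{H_s:s\in S\}$ of abelian groups and an $\R_{an,exp}$-definable map $\sigma:T\to S$ with $G_t=H_{\sigma(t)}$ for every $t$. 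Under this reading, ``$G_{t^*}$ is $R^*$-semialgebraic with semialgebraic group law'' is a first-order statement that passes to $\CR^*$, and your argument goes through verbatim. Apart from this point and the monotonicity bookkeeping you already flag, nothing is missing.
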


In Conjecture \ref{conj2} we restricted our discussion to definable sets in o-minimal structures, but the same problem
could be formulated for arbitrary smooth curves in $\mathbb R^n$.

\begin{qn}\label{question1}
Let $X\sub \mathbb R^n$ be a connected smooth curve. Is there
$m\in \mathbb N$
such that $X(m)$ is an approximate subgroup of $\la \mathbb R^n,+\ra$?
\end{qn}

Let us see that when $X$ is compact the answer to the above question is positive: Indeed, without loss of generality, $0\in X$ and $X$ is given by $\gamma:\R\to \mathbb R^n$. Moreover, we can assume that $\R^n$ is the minimal linear space containing $X$. Thus, there are $t_1,\ldots, t_n$ such that $\dot{\gamma(t_1)},\ldots, \dot{\gamma(t_n)}$ form a basis for $\R^n$ (otherwise, $\R^n$ would not be minimal).
It follows that the map
$$(x_1,\ldots, x_n)\mapsto \gamma(x_1)+\cdots+\gamma(x_n):\R^n\to \R^n$$ is a submersion at the point $(\gamma(t_1),\ldots, \gamma(t_n))$ and hence the point\linebreak $\gamma(t_1)+\cdots +\gamma(t_n)$ is an internal point
of $X(n)$ inside $\R^n$. Since $X(n)+X(n)$ is compact it can be covered by finitely many translates of $X(n)$, so that $X(n)$
is an approximate subgroup.

Note that even if the answer to Question \ref{question1} is positive,  one does not expect any uniformity statement such as that of  Theorem \ref{uniformity} to hold at this level of generality.

We finish this part of the introduction by pointing out that one cannot expect a positive answer to the above question
without the model theoretic (o-minimality) or the topological (smoothness) assumptions. The example was suggested to us by P. Simon. A similar example was also proposed by E. Breuillard.

\begin{example}Let $G=\mathbb R^{\mathbb N}$ with coordinate-wise addition and let $X\sub \R^{\mathbb N}$ be the set of all elements with at most one nonzero coordinate. We claim that for no $n$ is the set $X(n)$ an approximate subgroup.
Indeed,  assume that the set $X(n+1)$ is covered by finitely many translates of $X(n)$.
%
 Let $p: \R^{\mathbb N}\rightarrow  \R^{2(n+1)}$ be the projection onto the first $2(n+1)$ coordinates. The set $p(X(n))$ consists of the tuples with at most $2n$ coordinates different than $0$,  so for any finite subset $A$ of $\R^{\mathbb N}$ we have that $p(A+X(n))=p(A)+p(X(n))$ has dimension $2n$. On the other hand, $p(X(n+1))=\R^{2(n+1)}$, a contradiction.\medskip

Because $\la \R^{\mathbb N},+\ra$ is isomorphic as a group to $\la \mathbb R,+\ra$, we can also find a set $X\sub \mathbb R$ such that for no $n$ is the set $X(n)$  an approximate subgroup.

\end{example}

\subsection{The non-abelian case}It has been shown in \cite{BEM}  that Fact \ref{Thm21} fails for non-abelian groups. More precisely, it was shown
that every definable centerless group, in a sufficiently saturated o-minimal structure, contains a definably generated subgroup with a definable  generic set, which is not the cover of any definable group. However, as far as we know the following question is still open.

\begin{qn} Let $\CU$ be a definably generated group in an o-minimal structure. Does $\CU$ contain a definable generic set?
\end{qn}

In \cite[Section 7]{NIP} there is a discussion of locally definable (called Ind-definable) groups and it is shown (see Proposition 7.8 there) that
every locally definable group contains a definably generated subgroup $\CU$ of the same dimension which contains a definable generic set (using also \cite[Theorem 2.1]{EP}).

\smallskip
\noindent{\emph{Acknowledgements.} We thank Eliana Barriga for reading and commenting on an early version of this paper.

\section{Preliminaries}\label{preliminaries}

 Let $\mathcal{M}$ be an arbitrary $\kappa$-saturated o-minimal structure for $\kappa$
sufficiently large. By \emph{bounded} cardinality, we mean cardinality
smaller than $\kappa$. We refer the reader to \cite{BO} and \cite{ed1} for the basics concerning locally definable groups. A \emph{locally definable group}  is a group $\la \CU,\cdot
\ra$ whose universe is a directed union $\CU=\bigcup_{k\in \mathbb{N}} X_k$ of
definable subsets of $M^n$ for some fixed $n$, and for every $i,j\in \mathbb{N}$,
the restriction of group multiplication to $X_i\times X_j$ is a definable function
(by saturation, its image is contained in some $X_k$).  The dimension of $\CU$ is by
definition $\dim(\CU)=\max\{ \dim(X_k):k\in \mathbb{N}\}$.

A map $\phi:\CU\to \CH$ between locally definable groups is called
{\em locally definable}  if for every definable $X\sub \CU$ and $Y\sub \CH$, the set
$graph(\phi)\cap (X\times Y)$ is definable.
Equivalently, the restriction of $\phi$ to any definable set is a definable map. If $\phi$ is surjective, then there exists a locally definable section $s:\CH \rightarrow \CU$ of $\phi$.

For a locally definable group $\CU$, we say that $\CV\sub \CU$ is {\em a
compatible subset of $\CU$} if for every definable $X\sub \CU$, the intersection
$X\cap \CV$ is a definable set (note that in this case $\CV$ itself is a countable union of definable sets). We say that $\CU$ is \emph{connected} if there is no proper  compatible subgroup of bounded index. By  \cite{ed1}, every locally definable group $\CU$ has a
 connected component $\CU^0$, that is, a connected compatible subgroup of $\CU$ of
 the same dimension. Moreover, $\CU$ admits a locally definable topological structure that makes the group operations continuous. Note that we still use the term ``definably connected'' when referring to
 definable sets. Note also that if $\phi:\CU\to \CV$ is a locally definable
homomorphism between locally definable groups, then $\ker(\phi)$ is a compatible locally definable normal
subgroup of $\CU$. In fact, the following holds.

\begin{fact}{\cite[Theorem 4.2]{ed1}}\label{edmundo} If $\CU$ is a locally definable group and
 $\CH\sub \CU$ is a locally definable normal
subgroup then $\CH$ is a compatible subgroup of $\CU$ if and only if there exists a
locally definable surjective homomorphism of locally definable  groups $\phi:\CU\to
\CV$ whose kernel is $\CH$.
\end{fact}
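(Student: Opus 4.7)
The plan is to prove the two directions of the biconditional separately. The reverse direction is essentially immediate: given a locally definable surjective homomorphism $\phi:\CU\to\CV$ with $\ker(\phi)=\CH$, for any definable $X\sub\CU$ the restriction $\phi|_X$ is definable by local definability of $\phi$. Since the singleton $\{e_\CV\}$ is definable, we conclude that $X\cap\CH=(\phi|_X)^{-1}(e_\CV)$ is definable, so $\CH$ is compatible.

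For the forward direction, the strategy is to build the quotient $\CV:=\CU/\CH$ as a locally definable group together with the natural projection $\pi:\CU\to\CV$. Write $\CU=\bigcup_{k} X_k$ as a directed union of definable sets. By local definability of multiplication and inversion, for each $k$ there is some $j(k)$ with $X_k^{-1}X_k\sub X_{j(k)}$. Since $\CH$ is compatible, $\CH\cap X_{j(k)}$ is definable, and hence the binary relation $E_k$ on $X_k$ given by $xE_ky\Lrarr x^{-1}y\in\CH$ is a definable equivalence relation. Using definable choice in o-minimal structures, one can pick a definable set of representatives $Y_k\sub X_k$ for $E_k$ together with a definable retraction $r_k:X_k\to Y_k$ satisfying $r_k(x)E_k x$ for all $x\in X_k$.

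The quotient $\CV$ is then equipped with a locally definable structure as follows. Let $\overline{X_k}:=\pi(X_k)$; the restriction of $\pi$ to $Y_k$ is a bijection onto $\overline{X_k}$, via which $\overline{X_k}$ inherits the definable structure of $Y_k$. Multiplication on $\CV$ is locally definable: for $y,y'\in Y_k$ one has $\pi(y)\cdot\pi(y')=\pi(y\cdot y')$, where $y\cdot y'$ lies in some $X_{j'}$, and therefore $\pi(y)\cdot\pi(y')$ corresponds to $r_{j'}(y\cdot y')\in Y_{j'}$, a definable operation $Y_k\times Y_k\to Y_{j'}$. The map $\pi$ itself is locally definable via the $r_k$, and its kernel is $\CH$ by construction.

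The main obstacle is verifying the coherence of the construction: the sections $r_k$ on different $X_k$ need not agree on overlaps, so one must show that the transition maps $Y_k\to Y_{k'}$ (induced by $Y_k\sub X_k\sub X_{k'}$ and $y\mapsto r_{k'}(y)$) are definable bijections onto their images, and that the resulting universe $\CV$ can be realized as a subset of some fixed $M^m$ presented as a directed union of definable pieces. This can be arranged either by constructing the $Y_k$ inductively so that $Y_k\sub Y_{k'}$ whenever they are forced to share representatives, or by treating $\CV$ abstractly as a directed system with the $Y_k$ serving as definable charts together with definable gluing maps. Once this coherence is verified, the quotient structure is independent of the choices made and the construction yields the desired locally definable surjective homomorphism with kernel $\CH$.
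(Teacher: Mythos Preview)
The paper does not prove this statement: it is recorded as a \emph{Fact} with an explicit citation to \cite[Theorem 4.2]{ed1} and no accompanying argument. There is therefore nothing in the present paper to compare your proposal against; any comparison would have to be with Edmundo's original proof.

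On the substance of your attempt: the reverse direction is fine. For the forward direction your outline is along the right lines, but it is incomplete in exactly the place you flag. You invoke definable choice to produce the sections $r_k$, which presupposes that the ambient o-minimal structure eliminates imaginaries (e.g.\ expands an ordered group); this hypothesis is implicit throughout the paper but you should state it. More seriously, the definition of ``locally definable group'' in this paper requires the universe to be a directed union of definable subsets of a \emph{fixed} $M^n$, so you cannot simply treat $\CV$ as an abstract directed system with definable transition maps and declare victory. You must actually embed the coherent system of $Y_k$'s into some $M^m$, and your proposal does not carry this out---the phrase ``this can be arranged either by \ldots'' is where the real work lies. One clean way is to build the $Y_k$ inductively along a cofinal chain $X_{k_0}\sub X_{k_1}\sub\cdots$, choosing $Y_{k_{i+1}}$ so that it extends $Y_{k_i}$; then $\CV=\bigcup_i Y_{k_i}\sub M^n$ is genuinely a locally definable set. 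Until that step is written out, the forward direction remains a sketch rather than a proof.
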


In Definition \ref{genproperty} we  introduced the notion of an abelian locally
 definable group having the generic property. Now, we stress some easy properties regarding that notion.
 For that, we need the following notation that will be used throughout the paper.

\begin{notation}\label{notation}Let $G$ be an abelian group and $X$  a subset.
The set $X(m)$ denotes the addition of $X-X$ to itself $m$ times. We say that $X$ is symmetric if $X=-X$.
\end{notation}

\begin{remark}\label{rmkgenprop}(1) An abelian locally definable group $\CG$ has the generic property if and
only if for every definable subset $Y\sub \CG$, there are $m,k \in \N$ and $0\in
A\sub Y(3m)$, $|A|\leq k$,  such that $Y(m)+Y(m)\sub A+Y(m)$. In particular, $Y(m)$
is a $k$-approximate group.

\smallskip
\noindent (2) If $\CG$ has the generic property and $\CH$ is a locally definable subgroup of $\CG$, then $\CH$ has also
the generic property.

\smallskip
\noindent (3) Let $\CG$ and $\CV$ be abelian locally definable groups, and
$\pi:\CG\rightarrow \CV$  a surjective locally definable homomorphism. If $\CG$
has the generic property, then so does $\CV$. Indeed, for $X\sub
\CV$ definable, let $Y\sub \CG$ be any definable set with $\pi(Y)=X$ (such $Y$
exists by saturation).
 Since $\CG$ has the generic
property, there are $m,k \in \N$ and a set $0\in A\sub Y(3m)$, $|A|\leq k$ such that
$Y(m)+Y(m)\sub A+Y(m)$. In particular, we get that $0\in \pi(A)\subset X(3m)$ and
$$X(m)+X(m)=\pi(Y(m)+Y(m))\sub \pi(A+Y(m))=\pi(A)+X(m),$$
as required.

\smallskip
\noindent (4) The generic property is preserved under taking reducts. Namely, let $\CM'$ be an o-minimal expansion of $\CM$.
 By (1) above, if $\CG$ is a locally definable group in $\CM$  with the
  generic property with respect to $\CM'$, then $\CG$  has the generic
  property with respect to $\CM$. It is also clear, again using (1), that the generic property is preserved under taking elementary substructures. That is, let $\CN$ be an elementary extension of $\CM$. Let $\CG=\bigcup_{\ell\in \N}X_\ell$ be a locally definable group in $\CM$, and denote by $\CG(N)$ its realization in $\CN$. Then $\CG$ has the generic property with respect to $\CM$ if and only if $\CG(N)$ has the generic property with respect to $\CN$. For, let $Y\subset X_{\ell}(N)$ be a subset of $\mathcal{G}(N)$ definable over a finite tuple $d\in N$. Replace the parameters $d$ by variables $t$, and take the definable set $T= \{t:Y_t\subset X_\ell(N)\}$. Since $T$ is definable over $\CM$, we can consider the definably family in $\CM$ of definable subsets $\{Y_t:t\in T(M)\}$ of $G$. By (1) and saturation of $\CM$ there are $m,k\in \N$ such that for all $t\in T(M)$ there is $0\in
A_t\sub Y_t(3m)$, $|A_t|\leq k$,  such that $Y_t(m)+Y_t(m)\sub A_t+Y_t(m)$, as required.
   \end{remark}

We can now formulate:
\begin{prop} \label{propnew} Let $\CG$ be an abelian locally definable group in $\CM$ (which is still sufficiently saturated). Then the following are
equivalent:
\begin{enumerate}
\item $\CG$ has the generic property. \item For every definable family $\{X_t:t\in
T\}$ of subsets of $\CG$ there exist $m,k\in \mathbb N$ such that for every $t\in
T$, there exists a subset $0\in A\sub X_t(3m)$, of size at most $k$ such that
$$X_t(m)+X_t(m)\sub A+X_t(m).$$
\end{enumerate}

In particular, $\CG$ has the generic property in $\CM$ if and only if it has the
generic property in any/some elementary extension of $\CM$.
\end{prop}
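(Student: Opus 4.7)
The plan is as follows. Direction (2)$\Rightarrow$(1) is immediate by taking $T$ a singleton. For (1)$\Rightarrow$(2), I argue by contradiction using $\kappa$-saturation; the main difficulty will be establishing enough monotonicity to ensure finite satisfiability of a certain countable partial type.

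Concretely, suppose (2) fails for some definable family $\{X_t : t \in T\}$, and for each $(m,k) \in \N^2$ let $\phi_{m,k}(t)$ denote the first-order formula asserting the existence of a finite set $0 \in A \sub X_t(3m)$ with $|A| \leq k$ and $X_t(m)+X_t(m) \sub A+X_t(m)$. The goal is to realize the countable partial type
$$p(t) := \{t \in T\} \cup \{\neg \phi_{m,k}(t) : m,k \in \N\}$$
by saturation, producing some $t^* \in T$; then Remark \ref{rmkgenprop}(1) applied to the definable set $X_{t^*} \sub \CG$ forces $\phi_{m,k}(t^*)$ for some $(m,k)$, a contradiction.

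The central step will be finite satisfiability of $p(t)$, for which I would prove the following monotonicity: if $\phi_{m,k}(t)$ holds, then so does $\phi_{m',k'}(t)$ whenever $m' \geq 2m$ and $k' \geq k^{\lfloor 2m'/m\rfloor - 1}$. A routine induction on $n$ yields $X_t(nm) \sub (n-1)A + X_t(m)$ from the hypothesis. Setting $q = \lfloor 2m'/m\rfloor$ and $r = 2m' - qm$ and using $m+r < 2m \leq m'$, this gives
$$X_t(m')+X_t(m') = X_t(2m') \sub (q-1)A + X_t(m+r) \sub (q-1)A + X_t(m'),$$
and one checks that $A' := (q-1)A \cap X_t(3m')$ has size at most $k^{q-1}$, contains $0$, and still covers $X_t(m')+X_t(m')$ (the point being that any $b = y-z$ with $y \in X_t(2m')$ and $z \in X_t(m')$ lies in $X_t(3m')$). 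Given this, any finite subtype $\{\neg \phi_{m_i, k_i}\}_{i=1}^s$ of $p(t)$ is realized by any $t \in T$ witnessing $\neg \phi_{M,K}$ with $M := 2\max_i m_i$ and $K := \max_i k_i^{\lfloor 2M/m_i\rfloor - 1}$; such $t$ exists by the assumed failure of (2).

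For the final assertion about elementary extensions, the equivalence (1)$\Leftrightarrow$(2) converts the generic property into a property expressible family-by-family, and the uniform $(m,k)$ of clause (2) is then transferable across elementary equivalence, along the lines sketched in Remark \ref{rmkgenprop}(4). I expect the monotonicity computation to be the only nontrivial ingredient; the rest is a routine saturation/compactness argument.
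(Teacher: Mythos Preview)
Your proof is correct and follows the same route as the paper's: invoke Remark~\ref{rmkgenprop}(1) and run a saturation/compactness argument over the countable family $\{\phi_{m,k}\}$. The paper compresses all of this into a single sentence (``immediate from Remark~\ref{rmkgenprop}(1) and saturation''), leaving implicit exactly the monotonicity step you spell out---that $\phi_{m,k}(t)$ propagates to $\phi_{m',k'}(t)$ for suitably larger $(m',k')$, which is what makes the partial type finitely satisfiable; your version is a faithful expansion of the intended argument.
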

\proof This follows immediately from Remark \ref{rmkgenprop} (1) and saturation.\qed

\begin{remark} While we focus here on o-minimal structures, the notions we defined make sense in any sufficiently saturated structure, in which case Remark
\ref{rmkgenprop} and Proposition \ref{propnew} are also true.
\end{remark}

\section{Group extensions}\label{sec:extensions}

In this section we study the existence of definable generic sets when dealing with abelian group extensions, in an arbitrary
o-minimal structure $\mathcal{M}$. As a corollary, we  prove that definably generated subgroups of abelian torsion-free definable groups contain definable generic sets. When $\CM$ expands a real closed field $R$, we deduce a similar result
for definable linear groups over $R$.

\begin{proposition}\label{propExt}
Assume that we are given an exact sequence of  locally definable abelian groups and maps,
$$
\begin{diagram}
\node{0}\arrow{e}\node{\mathcal H} \arrow{e,t}{i}\node{\mathcal G}
\arrow{e,t}{\pi}\node{\mathcal V} \arrow{e} \node{0}
\end{diagram},
$$
where $\CV$ is connected and admits a lattice. Let $Y\sub \CV$ be a definable generic set and $s:Y\to
\CG$  a definable section. Then the intersection  $\la s(Y)\ra\cap i(\CH)$ is definably
generated.
\end{proposition}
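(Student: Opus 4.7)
The plan is to exhibit an explicit definable subset $D\subset i(\CH)$ together with a finite set $F\subset i(\CH)$ such that $\langle s(Y)\rangle\cap i(\CH)=\langle D\cup F\rangle$; since any finite set is definable, this gives the required definable generation.

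I would begin with a standard reduction. By Fact \ref{Thm21} applied to $\CV$, the lattice $\Gamma$ may be taken isomorphic to $\Z^k$ with fixed generators $\gamma_1,\ldots,\gamma_k\in\CV$; let $p\colon\CV\to G:=\CV/\Gamma$ be the locally definable quotient onto the connected definable abelian group $G$. The image $\bar Y:=p(Y)\subset G$ is a definable generic subset of $G$, so by the standard o-minimal facts about generics in a connected definable group there is $n_0\in\N$ with $\bar Y(n_0)=G$. Using connectedness of $\CV$ and genericity of $Y$, I would reduce to the case $\langle Y\rangle=\CV$; in particular each $\gamma_j$ admits a fixed decomposition $\gamma_j=\sum_l\epsilon_{j,l}\,y_{j,l}$ with $y_{j,l}\in Y$, $\epsilon_{j,l}\in\{\pm1\}$, and we set $\widetilde{\gamma_j}:=\sum_l\epsilon_{j,l}\,s(y_{j,l})\in\pi^{-1}(\gamma_j)\subset\CG$. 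The definable candidate is
\[
D:=\bigl\{\,s(y_1)+s(y_2)-s(y_3)\,:\,y_1,y_2,y_3\in Y,\ y_1+y_2=y_3\,\bigr\}\subset i(\CH),
\]
and $F\subset i(\CH)$ is a finite set that records the (finitely many) $\Z$-linear relations in $\CG/i(\CH)\cong\CV$ among the lifts $\widetilde{\gamma_j}$ that already come from the fixed expressions above.

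To prove $\langle s(Y)\rangle\cap i(\CH)=\langle D\cup F\rangle$, take an arbitrary $z=\sum_{i=1}^m\epsilon_i\,s(y_i)$ with $\sum_i\epsilon_i y_i=0$. The strategy is a reduction on the expression of $z$: at each step, either a partial sum $y_i+y_{i+1}$ lies in $Y$, in which case the identity $s(y_i)+s(y_{i+1})=s(y_i+y_{i+1})+(\text{element of }D)$ applies directly; or else, using $\bar Y(n_0)=G$, we write $y_i+y_{i+1}=\sum_j n_j\gamma_j+y'$ with $y'\in Y$ and $|n_j|$ bounded, and correspondingly rewrite $s(y_i)+s(y_{i+1})=s(y')+\sum_j n_j\widetilde{\gamma_j}+(\text{element of }\langle D\rangle)$. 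Iterating, $z$ reduces modulo $\langle D\rangle$ to an expression of the form $\sum_j N_j\widetilde{\gamma_j}$; since $\pi(z)=0=\sum_j N_j\gamma_j$ and $\Gamma\cong\Z^k$, the tuple $(N_j)$ is forced to be trivial, so the residual $\widetilde{\gamma_j}$-combination already lies in $\langle F\rangle$ by the choice of $F$. Hence $z\in\langle D\cup F\rangle$, as required.

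The main obstacle is the combinatorial bookkeeping of this reduction: each rewrite step may temporarily lengthen the expression via the $n_j\widetilde{\gamma_j}$ terms, and one must argue that the procedure terminates (for instance, by a suitable length function on the pair ``number of $s(y_i)$-terms plus total $|n_j|$-weight'') and that the intermediate $|n_j|$ stay controlled by $n_0$ and the finite rank $k$. The inputs that make this go through are precisely the genericity of $\bar Y$ in the definable group $G$ (giving the uniform bound $\bar Y(n_0)=G$) and the finite generation of $\Gamma$; managing these two bounds simultaneously, so that only finitely many ``lattice witnesses'' in $F$ are ever needed, is the technical crux.
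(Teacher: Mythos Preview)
Your outline is on the right track and shares the paper's overall strategy, but there are two genuine gaps that prevent the argument from going through as written.

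First, your definable set $D=\{s(y_1)+s(y_2)-s(y_3):y_1+y_2=y_3\text{ in }Y\}$ is too small for the rewrite step you need. In the case $y_i+y_{i+1}\notin Y$ you assert
\[
s(y_i)+s(y_{i+1})=s(y')+\sum_j n_j\widetilde{\gamma_j}+(\text{element of }\langle D\rangle),
\]
but the correction term $s(y_i)+s(y_{i+1})-s(y')-\sum_j n_j\widetilde{\gamma_j}$, while it does lie in $i(\CH)$, is exactly the kind of element whose membership in $\langle D\cup F\rangle$ you are trying to prove; so the step is circular. The paper avoids this by building the lattice lifts into $D$ from the start: after arranging that the generators $\gamma_j$ lie in $Y$, it sets $\Delta=\langle s(\gamma_1),\dots,s(\gamma_k)\rangle$, lets $\Delta_0=\{\delta\in\Delta:\pi(\delta)\in Y(2)\}$ (a finite set), and takes $D:=(\Delta_0+s(Y)(2))\cap\CH$. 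Then the correction $s(y_{2k-1})+s(y_{2k})-\alpha_k-s(w_k)$ is in $D$ \emph{by definition}, not by an inductive appeal.

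Second, from $\bar Y(n_0)=G$ you only get $y_i+y_{i+1}\in Y(n_0)+\Gamma$, so the $y'$ you produce lies in $Y(n_0)$, not in $Y$; your length does not decrease and termination fails. The paper first enlarges $Y$ to $Y+A$ for a suitable finite $A$ so that $Y+\Gamma=\CV$ outright. With this in hand, it pairs the $2^n$ summands into $2^{n-1}$ pairs and replaces each pair by a single $s(w_k)$ plus a lattice lift, giving a clean halving induction that disposes of the termination problem you flagged. Once this is done your set $F$ becomes superfluous: the residual $\sum_j N_j\widetilde{\gamma_j}$ satisfies $\sum_j N_j\gamma_j=0$ in the free group $\Gamma$, so all $N_j=0$.
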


 \proof By  \cite[Lemma 1.7]{EP},
$\mathcal{V}=\la Y \ra$. In particular, $\pi$ sends the group $\la s(Y)\ra$ onto $\CV$. Without loss of generality, we may assume that $i$ is the identity map.

Henceforth we will use that given a definable set $Z\sub \CV$, we can assume that $Z\sub Y$. Indeed, by saturation there
 is $n$ such that $Z\sub Y(n)$ and by definable choice there is a section $r:Z\rightarrow s(Y)(n)\sub \la s(Y) \ra $.
 Thus we can extend the section $s:Y\rightarrow \CG$ to a section $\widetilde{s}:Y\cup (Z\setminus Y)\rightarrow \CG$
 via $r$ in such a way that $\la s(Y)\ra =\la \widetilde{s}(Y\cup Z)\ra$. Therefore we can work with the generic set
 $Y\cup Z$ instead of $Y$, as required. For example, we can assume that $Y\ni 0$ is symmetric (extend the section $s$
  to the set $-Y\cup \{0\}$). Moreover, we can set $s(0)=0$. For, let $y_0:=s(0)\in \CH$ and consider the definable
   section $\widetilde{s}:Y\rightarrow \CG$ such that $\widetilde{s}=s$ in $Y\setminus \{0\}$ and $\widetilde{s}(0)=0$.
    If $\widetilde{D}$ is a definable set which generates $\langle \widetilde{s}(Y)\rangle\cap \CH$, then
     $D:=\widetilde{D}\cup \{y_0\}$ generates  $\langle s(Y)\rangle\cap \CH$, as required.

By Fact \ref{Thm21} and since $Y$ is generic, the locally definable group $\CV$ admits a lattice $\Gamma\simeq \mathbb Z^k$.
Since $\CV/\Gamma$ is definable and $Y$ generic in $\CV$,
 there is a finite set $A\sub \CV$  such that $Y+A+\Gamma=\CV$. Indeed, to see that note that the image
 of $Y$ in $\CV/\Gamma$ is a generic so finitely many translates of it cover the group.
 Now, without loss of generality, we can assume that $A$ contains a fixed set
 of generators $\gamma_1,\ldots,\gamma_k$ of  $\Gamma$. Therefore we can assume that $Y+\Gamma=\CV$ and
 $\gamma_1,\ldots,\gamma_k\in Y$ (extending the section $s$ to $Y+A$).

Let $\Delta=\la s(\gamma_1),\ldots,s(\gamma_k)\ra$ and note that $\pi|_{\Delta}:\Delta\rightarrow \Gamma$
is an isomorphism. Consider the symmetric finite set (notice that $Y(2) \cap \Gamma$ is finite)
$$\Delta_0:=\{ \delta\in \Delta: \pi(\delta)\in Y(2)\}$$
and the definable set
$$D:=(\Delta_0+s(Y)(2))\cap \CH \ \sub \  \la \Delta+ s(Y) \ra \cap \CH \ = \ \la s(Y)\ra \cap \CH.$$
Note that $0\in D$, and we now claim that $D$ generates $\la s(Y)\ra \cap \CH$. To
prove that, it is sufficient to show the following:

\medskip
\noindent {\em Claim.} For all $n$ and for every
$\delta_1,\ldots,\delta_{2^n}\in \Delta$ and $y_1,\ldots, y_{2^n}\in Y$, if
$\Sigma_{i=1}^{2^n} \delta_i+s(y_i)\in \CH$ then $\Sigma_{i=1}^{2^n} \delta_i+s(y_i) \in \la
D\ra$.

\medskip
Indeed, granted the claim, pick $\Sigma^m_{i=1}s(y_i)\in \la s(Y)\ra\cap \CH$. Define $\delta_1=\cdots=\delta_{2^m}=0$ and $y_{m+1}=\cdots=y_{2^m}=0$. Since
$\Sigma_{i=1}^{2^n} \delta_i+s(y_i)=\Sigma^m_{i=1}s(y_i)\in \CH$ we deduce
$\Sigma^m_{i=1}s(y_i)\in \la D \ra$, as required.

\noindent {\em Proof of the claim}. By induction on $n$. The case $n=0$ gives
$\pi(\delta_1)+y_1=0$, hence $\pi(\delta_1)\in Y \sub Y(2)$, so $\delta_1\in
\Delta_0$. Therefore $\delta_1+s(y_1)\in D$.

Assume now that $\Sigma_{i=1}^{2^n} \delta_i+s(y_i)\in \CH$. We want to show that
$\Sigma_{i=1}^{2^n}\delta_i+s(y_i)$ is in $\la D\ra$. We write the sum in pairs:
$$\Sigma_{i=1}^{2^n}(\delta_i+s(y_i))=\Sigma_{k=1}^{2^{n-1}}
(s(y_{2k-1})+s(y_{2k})+\delta_{2k-1}+\delta_{2k}).$$ Now, because $Y+\Gamma=\CV$,
for each $k=1,\ldots, 2^{n-1}$ there is $w_k\in Y$ and
$\beta_k\in \Gamma$ such that $y_{2k-1}+y_{2k}=\beta_k+w_k$. Let $\alpha_k\in
\Delta$ be such that $\pi(\alpha_k)=\beta_k$. Note that $\beta_k\in Y(2)$, so that $\alpha_k \in \Delta_0$. Hence,
$$(s(y_{2k-1})+s(y_{2k})-\alpha_k-s(w_k))\in D.$$
Also because the image under $\pi$ of $s(y_{2k-1})+s(y_{2k})-\alpha_k-s(w_k)$ is
$0$, it belongs to $\CH$.

Thus the above sum also equals
\begin{equation*}
\begin{split}
\Sigma_{i=1}^{2^n}(\delta_i+s(y_i)) = & \Sigma_{k=1}^{2^{n-1}} \big(s(y_{2k-1})+s(y_{2k})-\alpha_k-s(w_k)\big)+ \\
& \Sigma_{k=1}^{2^{n-1}}
\big(\delta_{2k-1}+\delta_{2k}+\alpha_k +s(w_k)\big).
\end{split}
\end{equation*}
We already showed that $\Sigma_{k=1}^{2^{n-1}} \big(s(y_{2k-1})+s(y_{2k})-
\alpha_k-s(w_k)\big)\in \la D\cap \CH\ra$, so if we denote
$\widetilde{\delta}_k:=\delta_{2k-1}+\delta_{2k}+\alpha_k\in \Delta$ then
$$\Sigma_{k=1}^{2^{n-1}}
\big(\widetilde{\delta}_{k}+s(w_k)\big)\in \CH,$$ and it remains to see that it
belongs to $\la D \ra$. This follows by induction, so the claim is proved and with
it Proposition \ref{propExt}.\qed


\begin{prop}\label{p:thmExt}With $\CH$, $\CG$ and $\CV$ as in Proposition \ref{propExt}, assume that $X\sub \CG$ is a
definable set with $\la \pi(X)\ra=\CV$. Then $\la X\ra \cap i(\CH)$ is definably
generated.
\end{prop}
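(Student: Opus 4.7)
The strategy is to reduce to Proposition \ref{propExt} by extracting, from $X$, a definable generic set of $\CV$ together with a definable section into $\langle X\rangle$, and then to account for the ``extra'' elements of $\langle X\rangle$ not captured by this section. First, observe that $\pi(X)$ is definable (the restriction of a locally definable map to a definable set is definable, and the image of a definable set is definable), and by hypothesis $\CV=\langle \pi(X)\rangle$, so $\CV$ is definably generated. As $\CV$ is connected and admits a lattice, Fact \ref{Thm21} produces a definable generic set $Y\subseteq \CV$, and by \cite[Lemma 1.7]{EP} we have $\CV=\langle Y\rangle$. By saturation, there exists $N\in\mathbb{N}$ with $Y\subseteq \pi(X)(N)$, and definable choice yields a definable section $s\colon Y\to \CG$ of $\pi$ with $s(Y)\subseteq X(N)\subseteq \langle X\rangle$. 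Applying Proposition \ref{propExt} to this data, the group $\langle s(Y)\rangle\cap i(\CH)$ is definably generated, say by a definable set $D$.

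Next I would use definable choice once more to decompose $X$ relative to $\langle s(Y)\rangle$. Since $\pi(X)\subseteq \CV=\langle Y\rangle$, saturation gives $M\in\mathbb{N}$ with $\pi(X)\subseteq Y(M)$, so there is a definable function $t\colon X\to s(Y)(M)\subseteq \langle s(Y)\rangle$ with $\pi\circ t=\pi|_X$. Setting $h(x):=x-t(x)$ defines a definable map $h\colon X\to i(\CH)$, and in particular $h(X)\subseteq i(\CH)$ is a definable set contained in $\langle X\rangle\cap i(\CH)$.

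The conclusion is then that $\langle X\rangle\cap i(\CH)=\langle D\cup h(X)\rangle$. The inclusion $\supseteq$ is immediate. For $\subseteq$, write any $w\in \langle X\rangle\cap i(\CH)$ as $w=\sum_j \epsilon_j x_j$ with $x_j\in X$ and $\epsilon_j\in\{\pm 1\}$, and split it as
\[
w=\sum_j \epsilon_j t(x_j) + \sum_j \epsilon_j h(x_j).
\]
The right-hand summand lies in $\langle h(X)\rangle\subseteq i(\CH)$, so the left-hand summand lies in $\langle s(Y)\rangle\cap i(\CH)=\langle D\rangle$. Hence $w\in \langle D\cup h(X)\rangle$, as required.

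The conceptual heart has already been supplied by Proposition \ref{propExt}; the role of the present proof is purely to bridge from an arbitrary definable $X$ with $\langle\pi(X)\rangle=\CV$ to the specific ``generic set plus section'' setup of that proposition. The only potential obstacles are bookkeeping issues---verifying that $s$, $t$ and $h$ can be chosen definably---but these are standard applications of definable choice combined with the saturation of $\CM$.
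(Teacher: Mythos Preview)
Your proof is correct and follows essentially the same strategy as the paper's: both reduce to Proposition \ref{propExt} via a definable section of a generic $Y\subseteq\CV$ landing in $\langle X\rangle$, and then split an arbitrary element of $\langle X\rangle\cap i(\CH)$ into a piece in $\langle s(Y)\rangle\cap i(\CH)$ plus a piece coming from a definable ``correction'' set in $i(\CH)$. The only cosmetic difference is that the paper first enlarges $Y$ so that $\pi(X)\subseteq Y$ and arranges $s(\pi(X))\subseteq X$ (so the correction set is $E=X(1)\cap\CH$), whereas you introduce a separate section $t\colon X\to s(Y)(M)$ and use $h(X)$; these are equivalent bookkeeping choices.
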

\proof Again, we may assume that $i$ is the identity map. Since $\CV$ admits a lattice it contains a definable generic set $Y$. Without loss of generality, we may assume that $\pi(X)(1)\sub Y$. By saturation $Y\sub \pi(X)(\ell)$ for some $\ell\in \N$ and therefore by definable choice we can pick a section $s:Y\rightarrow \la X \ra$. Moreover, we can assume that $s(\pi(X))\sub X$. Let $E:= X(1)\cap \CH$. By Proposition \ref{propExt} we have that $\CH_0:=\la s(Y)\ra \cap \CH$ is definably generated.
Thus, to prove that $\la X\ra \cap \CH$ is definably generated it suffices to show that $\la X\ra \cap \CH=\la E\ra +\CH_0$.

To that aim, pick $x_1\ldots, x_{n}\in X$ such that $\Sigma_{i=1}^{n} x_i\in \CH$. We can write
$$\Sigma_{i=1}^{n} x_i=\Sigma_{i=1}^{n} (x_i-s(\pi(x_i))+\Sigma_{i=1}^n s(\pi(x_i)).$$
Note that $x_i-s(\pi(x_i))\in E$ for each $i=1,\ldots,n$ and therefore $\Sigma_{i=1}^{n} (x_i-s(\pi(x_i))\in \langle E \rangle$. Moreover,
$\Sigma_{i=1}^n s(\pi(x_i))=\Sigma_{i=1}^{n} x_i-\Sigma_{i=1}^{n} (x_i-s(\pi(x_i)) \in \CH$.
Since also $s(\pi(x_i))\in s(\pi(X))\sub s(Y)$ for each $i=1,\ldots,n$, we get $\Sigma_{i=1}^n s(\pi(x_i))\in \CH_0$ and so
 $\Sigma_{i=1}^{n} x_i\in \la E\ra +\CH_0$, as required.
\qed

\medskip
Before the main corollary we need also the following lemma.
\begin{lemma}\label{lemConn} Let  $\CG$ be an abelian locally definable group which is definably generated. Then its connected component
 is definably generated
by a definably connected set (with regard to the group topology).  In particular, if every {\em connected} definably
generated subgroup of $\CG$ contains a definable generic set, then $\CG$ has  the
generic property.
\end{lemma}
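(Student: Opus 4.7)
The plan is to produce a definably connected definable set generating $\CG^0$, and then use it to transfer the generic property from connected definably generated subgroups to arbitrary ones.

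First I would fix a definable symmetric set $X\ni 0$ with $\CG=\langle X\rangle$ and decompose $X=X_1\sqcup\cdots\sqcup X_n$ into its (finitely many) definably connected components with respect to the group topology on $\CG$. Since $\CG^0$ has the same dimension as $\CG$, it contains an open neighbourhood of $0$, so $\CG^0$ and each of its cosets are open in $\CG$. Using compatibility of $\CG^0$ (Fact \ref{edmundo}), the intersection $X_i\cap(g+\CG^0)$ is definable, and it is clopen in $X_i$ since the union of the remaining cosets is open. Definable connectedness of $X_i$ then forces $X_i$ to sit in a single coset $x_i+\CG^0$; choose $x_i\in X_i$ with $x_1=0$ and set $Y_i:=X_i-x_i\subset \CG^0$. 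The union $Y:=Y_1\cup\cdots\cup Y_n$ is a finite union of definably connected sets all containing $0$, hence definably connected and contained in $\CG^0$.

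Next I would observe that $\CG\subset \langle Y\rangle+\Lambda$ for $\Lambda:=\mathbb{Z}x_2+\cdots+\mathbb{Z}x_n$; intersecting with $\CG^0$ gives $\CG^0=\langle Y\rangle+\Lambda_0$, where $\Lambda_0:=\Lambda\cap \CG^0$ is a finitely generated abelian subgroup of $\CG^0$. Fixing generators $\lambda_1,\ldots,\lambda_k$ of $\Lambda_0$ and using definable path-connectedness of the connected locally definable group $\CG^0$ (from the topological structure of \cite{ed1}), I would pick definable paths $\Gamma_j\subset \CG^0$ from $0$ to $\lambda_j$ and set $Y':=Y\cup \Gamma_1\cup\cdots\cup\Gamma_k$. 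This $Y'$ is again a finite union of definably connected sets through $0$, so it is definably connected, sits inside $\CG^0$, and $\langle Y'\rangle\supset \langle Y\rangle+\Lambda_0=\CG^0$; the reverse inclusion is trivial, proving the first part.

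For the ``in particular'' statement I would take any definably generated subgroup $\CH\leq \CG$, apply the first part inside $\CH$ to write $\CH^0=\langle Y'\rangle$ with $Y'$ definably connected, and invoke the hypothesis on the connected definably generated subgroup $\CH^0\leq \CG$ to obtain a definable generic set $Z\subset \CH^0$, say $A+Z=\CH^0$ for some countable $A$. Since $\CH$ is a countable increasing union of definable sets, each of which meets only finitely many cosets of $\CH^0$ (by the same cosets-versus-components argument applied to $\CH$), the quotient $\CH/\CH^0$ is countable; choosing representatives $\{h_j\}$ gives $\CH=\bigcup_j(h_j+A+Z)=B+Z$ with $B:=\bigcup_j(h_j+A)$ countable, exhibiting $Z$ as generic in $\CH$.

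The delicate step I expect is the claim that a definably connected subset of $\CG$ lies in a single coset of $\CG^0$; it rests on $\CG^0$ being open in the group topology of \cite{ed1} together with its compatibility in $\CG$. The definable path-connectedness of $\CG^0$ used to join the finitely many $\lambda_j$ to $0$ is analogous and also standard in o-minimality.
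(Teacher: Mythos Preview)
Your argument is correct, but it takes a longer route than the paper's for the first part. Once you have $Y=\bigcup_i(X_i-x_i)$ definably connected with $0\in Y$ and $\CG=\langle Y\rangle+\Lambda$ for the finitely generated $\Lambda=\mathbb{Z}x_1+\cdots+\mathbb{Z}x_n$, the paper simply observes that $\langle Y\rangle$ is a connected locally definable subgroup of bounded index (since $\Lambda$ is countable), hence compatible by \cite[Fact 2.3(2)]{EP2}, and therefore must equal $\CG^0$; no further work is needed. You instead first argue each $X_i$ lies in a single $\CG^0$-coset (so $Y\subset\CG^0$), then intersect to get $\CG^0=\langle Y\rangle+\Lambda_0$, and finally invoke definable path-connectedness of $\CG^0$ to adjoin paths and absorb $\Lambda_0$. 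This is valid but the entire coset/path detour is unnecessary: the bounded-index observation already forces $\langle Y\rangle=\CG^0$, so $\Lambda_0\subset\langle Y\rangle$ automatically. Your approach does buy one thing: it makes explicit that the components $X_i$ already sit in single cosets of $\CG^0$, which the paper's argument yields only a posteriori. For the ``in particular'' clause your argument coincides with the paper's (bounded-index coset representatives plus a generic set in $\CH^0$).

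A minor citation point: compatibility of $\CG^0$ is stated directly in the preliminaries (from \cite{ed1}), not via Fact~\ref{edmundo}.
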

\proof Let $\CG$ be a locally definable group and $X\sub \CG$ be a definable set
which generates $\CG$. Let $X_1,\ldots,X_k$ be its connected components. Fix an
element $a_i$ in each $X_i$, and let $\Gamma=\la a_1,\ldots, a_k\ra$. Consider the
connected set $\widetilde{X}=\bigcup X_i-a_i$, and notice that $\la X\ra= \la
\widetilde{X}\ra+\Gamma$. Since $\la \widetilde{X}\ra$ is a locally definable
subgroup of $\CG$ of bounded index, it is compatible (\cite[Fact 2.3(2)]{EP2}), and since it is connected, it must be the connected component of $\CG$.

For the second part of the statement, let $\CH$ be a definably generated subgroup of
$\CG$. Then, by what we just showed, its connected component $\CH^0$ is definably
generated and therefore by hypothesis it contains a definable generic set $Y$, that
is, there is a bounded $A\sub \CH^0$
 such that $A+Y=\CH^0$. Since  $\CH^0$ has bounded index in $\CH$, there is a bounded $B\sub \CH$
  such that $B+\CH^0=\CH$. In particular $A+B+Y=\CH$, as required. \qed

\begin{thm}\label{t:Thmexten} Assume that we are given an exact sequence of abelian locally definable groups and maps
$$
\begin{diagram}
\node{0}\arrow{e}\node{\mathcal H} \arrow{e,t}{i}\node{\mathcal G}
\arrow{e,t}{\pi}\node{\mathcal V} \arrow{e} \node{0.}
\end{diagram}
$$
If $\CV$ and $\CH$ have the generic property then $\CG$ has the generic property.
\end{thm}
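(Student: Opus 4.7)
The plan is to reduce to connected definably generated subgroups via Lemma \ref{lemConn}, then combine Proposition \ref{p:thmExt} (to get a definably generated kernel) with the generic property of $\CH$ (to get a generic set inside the kernel) and finally glue with a section over a generic set in the quotient.

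By Lemma \ref{lemConn}, it suffices to show that every connected definably generated subgroup $\CG_0\subseteq \CG$ contains a definable generic set. Fix such $\CG_0$ and let $X\subseteq \CG_0$ be a definable set generating it; we may assume $i$ is an inclusion. First look at the image: $\pi(\CG_0)=\la \pi(X)\ra$ is a definably generated, connected (image of a connected locally definable group under a locally definable homomorphism) subgroup of $\CV$. Since $\CV$ has the generic property, $\pi(\CG_0)$ contains a definable generic set $Y$, and by Fact \ref{Thm21} it admits a lattice. Thus the induced exact sequence
$$
\begin{diagram}
\node{0}\arrow{e}\node{\CH\cap \CG_0} \arrow{e}\node{\CG_0}
\arrow{e,t}{\pi}\node{\pi(\CG_0)} \arrow{e} \node{0}
\end{diagram}
$$
satisfies the hypotheses of Proposition \ref{p:thmExt}, and with the definable set $X$ we conclude that $\CG_0 \cap \CH = \la X\ra \cap \CH$ is definably generated.

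Since $\CH$ has the generic property and $\CG_0\cap \CH$ is a definably generated subgroup of $\CH$, there is a definable generic set $Z\subseteq \CG_0\cap \CH$; that is, a bounded set $A\subseteq \CG_0\cap \CH$ with $A+Z=\CG_0\cap \CH$. On the quotient side, $Y\subseteq \pi(\CG_0)$ is definable generic, so there is a bounded $B\subseteq \pi(\CG_0)$ with $B+Y=\pi(\CG_0)$, and for each $b\in B$ we can choose $h_b\in \CG_0$ with $\pi(h_b)=b$. By saturation $Y\subseteq \pi(X)(\ell)$ for some $\ell$, so definable choice provides a definable section $s:Y\to \CG_0$ of $\pi$. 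Define the definable set $W:=s(Y)+Z\subseteq \CG_0$ and the bounded set $C:=\{h_b:b\in B\}+A$.

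To finish, pick any $g\in \CG_0$ and write $\pi(g)=b+y$ with $b\in B$, $y\in Y$. Then $\pi(g-s(y)-h_b)=0$, so $g-s(y)-h_b\in \CG_0\cap \CH=A+Z$, say $g-s(y)-h_b=a+z$ with $a\in A$, $z\in Z$. Hence $g=h_b+a+s(y)+z\in C+W$, showing that $W$ is a definable generic set in $\CG_0$. The main obstacle is verifying the assumptions of Proposition \ref{p:thmExt} for the restricted sequence: one must extract connectedness and a lattice on the image $\pi(\CG_0)$ from the generic property of $\CV$ alone, which is exactly what connectedness of $\CG_0$ together with Fact \ref{Thm21} delivers; once the kernel $\CG_0\cap \CH$ is known to be definably generated, the remaining combination is a straightforward section-plus-kernel argument.
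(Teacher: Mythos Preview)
Your proof is correct and follows essentially the same route as the paper: reduce via Lemma \ref{lemConn} to connected definably generated subgroups, use the generic property of $\CV$ and Fact \ref{Thm21} to obtain a lattice on the image so that Proposition \ref{p:thmExt} applies, then invoke the generic property of $\CH$ on the resulting definably generated kernel and combine the two generic sets. The only difference is cosmetic: the paper simply asserts that $Z_1+Z_2$ is generic, whereas you spell out the section-plus-coset argument in full.
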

\begin{proof} By Lemma \ref{lemConn} it is sufficient to consider subgroups of $\CG$
which are generated by definably connected sets. Let $X$ be a definably connected
set. Since $\pi(\la X \ra)=\la \pi(X)\ra$ is a definably generated connected group,
we have the exact sequence of locally definable groups
$$0\rightarrow\la X \ra \cap i(\CH) \rightarrow \la X \ra \rightarrow \la \pi(X) \ra \rightarrow 0.$$
By hypothesis the connected group $\la \pi(X)\ra$ contains a definable generic set, that is, there exists a definable
set $Z_1\sub \la X \ra$ such that $\pi(Z_1)$ is generic in $\la \pi(X)\ra$. In particular the group $\la \pi(X)\ra$ admits
 a lattice (see Fact \ref{Thm21}) and therefore by Proposition \ref{p:thmExt} the group $\la X \ra \cap i(\CH)$ is definably
  generated. Again by hypothesis, we have that $\la X \ra \cap i(\CH)$ contains a definable generic
   set $Z_2$. Finally, it is not hard to see that $Z_1+Z_2$ is generic in $\la X\ra.$\end{proof}

\subsection{Some applications of Theorem \ref{t:Thmexten}}\label{subsec:appl}
 First, we   study definably
generated subgroups of abelian torsion-free definable groups (see basic facts on
torsion-free groups definable in o-minimal structures in Section 2.1
 in \cite{ps}).

\begin{cor}\label{torsionfree} Any  abelian torsion-free definable group $G$ in an o-minimal
structure $\CM$  has the generic property with respect to  $\CM$.
\end{cor}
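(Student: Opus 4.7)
The plan is to induct on $\dim G$, using Theorem \ref{t:Thmexten} as the inductive engine and appealing to the known structure theory of abelian torsion-free definable groups in o-minimal structures to reduce to a one-dimensional base case. Specifically, I would invoke the results cited in \cite{ps}, Section 2.1, to produce a definable chain
$$0=G_0<G_1<\cdots<G_n=G$$
of definable subgroups whose successive quotients $G_i/G_{i-1}$ are one-dimensional, abelian, and torsion-free, and whose quotients are moreover each definably isomorphic to the additive group $(R_i,+)$ of a real closed field $R_i$ definable in $\CM$.

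For the base case, I need each $(R_i,+)$ to have the generic property with respect to $\CM$. I would consider $R_i$ together with all $\CM$-definable subsets of its Cartesian powers; this yields an $\aleph_1$-saturated o-minimal expansion of the real closed field $R_i$, saturation being inherited from $\CM$. Fact \ref{EP} then applies and gives the generic property of $\la R_i,+\ra$ with respect to this induced structure. Since, by construction, the definable subsets of $R_i^n$ in the induced structure are exactly the $\CM$-definable subsets of $R_i^n$, this is the same as the generic property of $(R_i,+)$ with respect to $\CM$.

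For the inductive step, at stage $i$ I would consider the short exact sequence of definable abelian groups
$$0\longrightarrow G_{i-1}\longrightarrow G_i\longrightarrow G_i/G_{i-1}\longrightarrow 0.$$
By the inductive hypothesis $G_{i-1}$ has the generic property, and by the base case so does $G_i/G_{i-1}\cong (R_i,+)$. Theorem \ref{t:Thmexten} then yields the generic property of $G_i$, and the induction terminates at $G_n=G$. Finally, the statement of the corollary concerns the ambient $\CM$, and since $G$ is already $\CM$-definable no further translation is required.

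The main obstacle is marshalling the correct structural result: one needs simultaneously the existence of a filtration of $G$ with one-dimensional torsion-free quotients and the identification of each such quotient with the additive group of a real closed field definable in $\CM$. Once these ingredients are in hand, the argument is a mechanical iteration of Theorem \ref{t:Thmexten}, with Fact \ref{EP} supplying the single non-trivial input at each one-dimensional slice.
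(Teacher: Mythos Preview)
Your inductive framework via Theorem~\ref{t:Thmexten} matches the paper's, and unrolling the induction into a filtration with one-dimensional quotients is equivalent to the paper's step of extracting, via \cite{PSe}, a single one-dimensional definable subgroup $H\le G$ and recursing on $G/H$.

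The gap is in your base case. Corollary~\ref{torsionfree} is stated for an \emph{arbitrary} o-minimal structure $\CM$, not only for expansions of real closed fields. In a linear o-minimal structure, for instance an ordered vector space over an ordered division ring, there need be no definable real closed field whatsoever, yet one-dimensional torsion-free definable groups (such as the underlying additive group itself) certainly exist. Hence the assertion that each one-dimensional quotient $G_i/G_{i-1}$ is definably isomorphic to the additive group of some definable real closed field $R_i$ is false in general, and Fact~\ref{EP} is simply unavailable in that regime. The reference \cite{ps} will supply the filtration with one-dimensional torsion-free quotients, but not the identification of those quotients with $(R_i,+)$.

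The paper sidesteps this by handling the one-dimensional case with a direct, self-contained argument valid in any o-minimal structure (Lemma~\ref{lem:one}): a one-dimensional torsion-free locally definable group is linearly ordered \cite[Corollary~8.3]{ed1}, so by Lemma~\ref{lemConn} one need only treat subgroups of the form $\bigcup_{n\in\N}(-nb,nb)$, for which $\Z b$ is visibly a lattice. No field structure and no appeal to Fact~\ref{EP} is required. If you want to rescue your approach, you must replace the invocation of Fact~\ref{EP} by such an order-theoretic argument at the one-dimensional level.
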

\proof We prove it by induction on $\dim(G)$. Assume first that $\dim(G)=1$ and
prove first a more general result.

\begin{lemma}\label{lem:one}  If $\CU$ is a $1$-dimensional torsion-free
locally definable group then it has the generic property. \end{lemma}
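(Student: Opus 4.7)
The plan is to reduce, via Lemma \ref{lemConn}, to showing the statement for a connected definably generated subgroup $\CH\le\CU$. If $\dim\CH=0$, then $\CH$ has bounded (countable) cardinality, and the definable singleton $\{0\}$ is itself generic with $A=\CH$. So assume $\dim\CH=1$; by Lemma \ref{lemConn} we may pick a definably connected definable generator $X$ of $\CH$, and, replacing $X$ by $(X-X)\cup\{0\}$, we may further assume that $X$ is symmetric with $0\in X$. The aim is to prove that $X$ itself is a generic set in $\CH$.

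The crucial structural input is that a connected $1$-dimensional torsion-free locally definable group admits a linear order compatible with its group operation. Indeed, every $1$-dimensional locally definable group is abelian; a torsion-free abelian group is always orderable in the algebraic sense; and the locally definable $1$-manifold structure on $\CH$, together with torsion-freeness (which rules out a circle-type topology), forces the order to be compatible with the group topology. With respect to such an order, the symmetric definably connected set $X$ takes the form of an interval $[-a,a]$ with $a>0$ in $\CH$.

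Since $X$ generates $\CH$, we have $\CH=\bigcup_n nX=\bigcup_n [-na,na]$, so the infinite cyclic subgroup $\Gamma:=\Z a$ is cofinal in $\CH$. For any $g\in\CH$ pick $k\in\Z$ with $ka\le g<(k+1)a$; then $g-ka\in[0,a]\subset X$, i.e., $g\in ka+X$. Hence $\Gamma+X=\CH$ with $\Gamma$ of bounded (countable) size, showing $X$ is a definable generic set of $\CH$. Equivalently, one may phrase the conclusion as exhibiting $\Gamma$ as a lattice in the sense of the paper and invoking Fact \ref{Thm21}.

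The main obstacle is the structural input of the second paragraph: establishing that every connected $1$-dimensional torsion-free locally definable group carries a group-compatible linear order. For the definable case this is classical from the existing classification of $1$-dimensional groups in o-minimal structures, but in the locally definable setting one has to glue local orders on the exhausting chain of definable pieces coherently, using torsion-freeness to prevent any ``wrapping'' as one passes through the chain. Once the order is in hand, the remaining Archimedean argument is elementary.
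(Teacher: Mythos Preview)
Your argument is essentially the paper's own proof: reduce via Lemma~\ref{lemConn} to a connected definably generated subgroup, equip it with a compatible linear order so that the symmetric connected generator becomes an interval, and then observe that the cyclic group generated by an endpoint is a lattice (equivalently, that translates of the interval by this cyclic group cover the subgroup). The ``main obstacle'' you flag---that a $1$-dimensional torsion-free locally definable group carries a definable linear group order---is exactly \cite[Corollary~8.3]{ed1}, which the paper simply cites; with that reference in hand your sketch is complete and there is no need to glue local orders by hand.
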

 \proof By \cite[Corollary 8.3]{ed1}, the
group $\CU$ can be linearly ordered. By Lemma \ref{lemConn} it suffices to study a
subgroup generated by a set of the form $(-b,b):=\{x\in G:-b<x<b\}$, that is,
$$\la (-b,b)\ra= \bigcup_{n\in \mathbb{\N}} (-nb,nb).$$

It is easy to verify that the group $\Gamma=\mathbb{Z}b$ is a lattice in $\la
(-b,b)\ra$ because $\la (-b,b)\ra /\Gamma$ is isomorphic to the definable group
$\big([0,b), \text{mod }b \big)$.\qed

Now, assume that $\dim(G)>1$. Then, by \cite{PSe}, there exists a definable subgroup $H$ of $G$ of dimension $1$.
 In particular, we have the exact sequence
$$0\rightarrow H \rightarrow G \rightarrow G/H \rightarrow 0.$$
Since $H$ and $G/H$ are abelian torsion-free definable groups smaller dimension it
follows by induction that they have the generic property. By Theorem
\ref{t:Thmexten}, so does $G$.\qed

Next we prove:
\begin{cor}\label{cor:linear} Let $\CM$ be an o-minimal expansion of a real closed
field and $G\sub Gl(n,R)$ a definable abelian linear group. Then $G$ has the generic
property with respect to $\CM$.
\end{cor}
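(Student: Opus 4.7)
The plan is to apply Theorem \ref{t:Thmexten} repeatedly, reducing the generic property for $G$ to two base cases: torsion-free abelian definable groups, handled by Corollary \ref{torsionfree}, and definably compact abelian definable groups, handled by a direct argument.

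For the decomposition, I would consider the Zariski closure $\bar G$ of $G$ in $Gl(n,R)$, which is an abelian $R$-algebraic group. It splits, up to finite index and isogeny, as $\bar G_u \times (R^{>0})^a \times T^b$, where $\bar G_u$ is unipotent, $(R^{>0})^a$ is the $R$-split part of the semisimple factor, and $T^b$ is the anisotropic part, with $T = \{(x,y)\in R^2 : x^2+y^2=1\}$. The factor $\bar G_u$ is $R$-definably isomorphic to $(R^d,+)$ via the matrix logarithm, hence torsion-free; the factor $(R^{>0})^a$ is torsion-free; and $T^b$ is definably compact. Composing the embedding $G\hookrightarrow \bar G$ with the successive projections produces a chain of exact sequences expressing $G$ as built from torsion-free pieces, a definably compact piece in $T^b$, and finite quotients (which trivially have the generic property, since every definably generated subgroup is then finite). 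Applying Theorem \ref{t:Thmexten} repeatedly then reduces the problem to the definably compact case.

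For the definably compact case, let $K$ be definably compact abelian and $X\sub K$ a symmetric definable set containing $0$. If $\dim X=0$, then $X$ is finite, $\la X\ra$ is a countable subgroup, and $\{0\}$ is a definable generic set with $A=\la X\ra$. If $\dim X>0$, let $H$ be the smallest definable subgroup of $K$ containing $X$, which exists by DCC for definable subgroups in o-minimal structures. The increasing chain $X(n)$ has dimensions eventually stabilizing at $\dim H$, so for some $n_0$ the set $X(2n_0)=X(n_0)-X(n_0)$ contains an open neighborhood $V$ of $0$ in the connected component $H^0$. Since $H^0$ is connected and definably compact, $V$ generates $H^0$ in finitely many steps, whence $X(m)\supset H^0$ for some $m$. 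Therefore $\la X\ra$ is a finite union of cosets of $H^0$, and $H^0$ is a definable generic set in it.

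The main obstacles are (i) executing the real torus decomposition \emph{definably} over the possibly nonstandard base $R$, where eigenvalues may lie in $R[i]$ and require care with definable choice, and (ii) the step in the definably compact case asserting that an open neighborhood of the identity generates a connected definably compact definable group in finitely many steps; the latter uses that open subgroups of a connected group coincide with the whole group, combined with the finite-subcover property of definable compactness.
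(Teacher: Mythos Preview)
Your reduction to the torsion-free and definably compact pieces via the structure of abelian linear algebraic groups is essentially the paper's approach; the paper cites \cite{PPS} to obtain a semialgebraic isomorphism $G\cong T^m\times (R^*_{>0})^k\times (R^+)^n$ with $T=SO(2,R)$, and then handles the torsion-free factor exactly via Corollary \ref{torsionfree}.

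The gap is in your treatment of the definably compact case. The claim that the dimensions $\dim X(n)$ stabilize at $\dim H$, where $H$ is the smallest definable subgroup of $K$ containing $X$, is false. Work in $\mathbb R_{an,exp}$ and take $K=T^2=SO(2,\mathbb R)^2$. Let $X$ be a short arc of the one-parameter curve $\theta\mapsto (e^{i\theta},e^{i\sqrt{2}\,\theta})$, which is definable (indeed in $\mathbb R_{an}$). Each $X(n)$ is a longer arc of the same immersed line, so $\dim X(n)=1$ for every $n$; yet $\la X\ra$ is dense in $T^2$, so the smallest definable subgroup $H$ containing $X$ is all of $T^2$ and $\dim H=2$. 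Thus no $X(m)$ ever contains an open subset of $H^0$, and your conclusion that $\la X\ra$ is a finite union of cosets of $H^0$ fails outright: here $\la X\ra$ is the full irrational line, a $1$-dimensional non-definable dense subgroup. The generic property does hold for this $\la X\ra$ (any $X(1)$ is generic, since $\la X\ra$ is a $1$-dimensional torsion-free locally definable group), but your argument does not establish it.

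The paper avoids this issue entirely: it does not attempt a direct compactness argument. Instead it observes that the universal cover of $T=SO(2,R)$ is a $1$-dimensional torsion-free locally definable group, hence has the generic property by Lemma \ref{lem:one}; then $T$, as a quotient, inherits the generic property by Remark \ref{rmkgenprop}(3), and $T^m$ follows from Theorem \ref{t:Thmexten} inductively. The point is that the compact factor here is very special (a power of a one-dimensional group), and one exploits its cover rather than its compactness.
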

\proof We may assume that $G$ is definably connected with regard its group topology. By \cite[Proposition
3.10]{PPS}, $G$ is definably isomorphic to a semialgebraic linear group, and hence
it is the connected component of $H(R)$ for some abelian linear algebraic subgroup of
$GL(n,K)$, defined over $R$ (here $K$ is the algebraic closure of $R$). By
\cite[Fact 3.1]{PPS}, $G$ is semialgebraically isomorphic to a group of the form
$T^m\times (R^*_{>0})^k \times (R^+)^n$, where $T=SO(2,R)$.

By Corollary \ref{torsionfree}, the group $(R^*_{>0})^k\times (R^+)^n$ has the
generic property, so by Theorem \ref{p:thmExt} it is enough to show that $T=SO(2,R)$
has the generic property. The universal covering of $T$ is a torsion-free $1$-dimensional locally definable group, so by Lemma \ref{lem:one}, it has the
generic property. Thus $G$ has the generic property.\qed

\section{Semialgebraic groups}\label{sec:semigr}

The main purpose of this section is to show, see Theorem \ref{t:mainsemi} below,
that every semialgebraic abelian group over a real closed field $R$ has  the generic
property with respect to certain o-minimal expansions of $R$, which we now fix.

\medskip
\textbf{ In the rest of the section, we fix $\mathcal{R}$ to  be an $\aleph_1$-saturated o-minimal
$L$-structure expanding a real closed field $R$ such that the $(L\cup
L_{\text{an,exp}})$-theory $\text{Th}(\mathcal{R})\cup T_{\text{an,exp}}$ is
consistent and has an o-minimal completion, call it $T_0$. We denote by $K:=R(i)$
its algebraic closure.}\\

For example, any real closed field, or more generally an $\aleph_1$-saturated  structure elementarily
equivalent to $\mathbb R_{an,exp}$ clearly satisfies the above.

\medskip
We start by analysing the case of abelian varieties.

\begin{prop}\label{abelianGP}Every embedded abelian variety  $A\sub \mathbb{P}^N(K)$ over $K$
has the generic property with
respect to $\mathcal{R}$.
\end{prop}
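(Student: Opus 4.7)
The plan is to transfer the generic property of $\la R^g,+\ra$, given by Fact~\ref{EP}, to $A$ using a covering $p:\CG\to A$ with $\CG$ a locally definable subgroup of some $\la R^g,+\ra$. Such a covering is furnished by Fact~\ref{AbelianTorus}, but only in models of $T_{\text{an,exp}}$, so the proof will proceed in a richer structure and then descend back to $\CR$.

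Concretely, by hypothesis $\text{Th}(\CR)\cup T_{\text{an,exp}}$ admits an o-minimal completion $T_0$. I would fix an $\aleph_1$-saturated model $\CR'\models T_0$ in the language $L\cup L_{\text{an,exp}}$, and observe that its $L$-reduct is $L$-elementarily equivalent to $\CR$. By Proposition~\ref{propnew}, the generic property of $A$ with respect to $\CR$ is equivalent to the same property with respect to the $L$-reduct of $\CR'$; and by Remark~\ref{rmkgenprop}(4) (preservation under passing to reducts), it is enough to show that $A$ has the generic property with respect to the full expansion $\CR'$.

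Now I would work entirely in $\CR'$. Since $\CR'\models T_{\text{an,exp}}$, Fact~\ref{AbelianTorus} produces a locally $\CR'$-definable subgroup $\CG$ of $\la (R')^g,+\ra$ and a locally $\CR'$-definable surjective covering homomorphism $p:\CG\to A$. Because $\CR'$ is itself an $\aleph_1$-saturated o-minimal expansion of a real closed field, Fact~\ref{EP} applies and gives that $\la (R')^g,+\ra$ has the generic property with respect to $\CR'$. By Remark~\ref{rmkgenprop}(2) this property passes to the locally definable subgroup $\CG$, and by Remark~\ref{rmkgenprop}(3), applied to the surjective locally $\CR'$-definable homomorphism $p$, it passes to $A$. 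Combined with the previous paragraph, $A$ has the generic property with respect to $\CR$.

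The main difficulty is conceptual rather than technical: one has to cleanly separate the role of $\CR$, where the generic sets we want ultimately to produce must be definable, from that of $\CR'$, where the analytic uniformization supplied by Fact~\ref{AbelianTorus} is available. Once the correct ambient structure is chosen, the rest is a straightforward chaining of the formal preservation properties of the generic property under locally definable subgroups, surjective homomorphisms, reducts, and elementary extensions, all recorded in Section~\ref{preliminaries}.
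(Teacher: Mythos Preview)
Your approach is the paper's: pass to an $\aleph_1$-saturated model of $T_0$, apply Fact~\ref{AbelianTorus} there, and then chain Fact~\ref{EP} through Remark~\ref{rmkgenprop}(2),(3),(4) to descend to $\CR$.

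One point needs tightening. You only arrange that the $L$-reduct of $\CR'$ is $L$-elementarily \emph{equivalent} to $\CR$, but then invoke Proposition~\ref{propnew} and Remark~\ref{rmkgenprop}(4), which transfer the generic property along elementary \emph{extensions}. Since $A$ is defined over parameters from $K=R(i)$, those parameters must actually live in $\CR'$ for the transfer to make sense. The fix is immediate and is exactly what the paper does: because $T_0$ contains $\mathrm{Th}_L(\CR)$, compactness makes $T_0\cup\mathrm{ElDiag}_L(\CR)$ consistent, so one may choose $\CR'$ so that its $L$-reduct is an elementary extension of $\CR$; then take an $\aleph_1$-saturated such $\CR'$ and proceed exactly as you wrote.
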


\begin{proof}First, note that by our assumptions there exists an elementary extension $\CR \prec \CR'$
such that $\CR'$ can be expanded to a model of $T_0$. Furthermore, we may assume
that this structure is $\aleph_1$-saturated. By Proposition \ref{propnew} and the
fact that the generic property is preserved under taking reducts and elementary substructures (Remark \ref{rmkgenprop}(4)),  it is sufficient to prove
the result in $\CR'$. Thus, all in all, we can assume that
 $\mathcal{R}$ is an $\aleph_1$-saturated model of $T_0$.

By Fact \ref{AbelianTorus}, there exist a locally definable subgroup $\CG$ of some $\la R^g, +\ra$
and a locally definable covering homomorphism $p:\CG\rightarrow A$. Thus, by Fact
\ref{EP} and Remark \ref{rmkgenprop}, the group $A$ has the generic
property.\end{proof}

\begin{prop}\label{l:linear} Let $H$ be an irreducible abelian $K$-algebraic group.
Then $H$ has the generic property with respect to $\mathcal{R}$.
\end{prop}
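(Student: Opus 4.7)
The plan is to reduce to the two ingredients already available: Proposition~\ref{abelianGP} for abelian varieties and Corollary~\ref{cor:linear} for abelian linear groups, and then glue them together via the extension result Theorem~\ref{t:Thmexten}. The structural tool that makes this gluing possible is Chevalley's structure theorem for algebraic groups.

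First, I would invoke the Chevalley--Barsotti theorem: since $H$ is an irreducible (hence connected) algebraic group over the algebraically closed field $K=R(i)$, it admits a unique connected normal linear algebraic subgroup $L\sub H$ such that the quotient $A:=H/L$ is an abelian variety. Because $H$ is abelian, so are $L$ and $A$. All of this construction is algebraic over $K$, hence $R$-semialgebraic when we view $K$-points as $R$-points via $K=R\oplus Ri$; in particular we obtain an exact sequence of abelian groups definable in $\CR$,
$$
\begin{diagram}
\node{0}\arrow{e}\node{L} \arrow{e,t}{i}\node{H}
\arrow{e,t}{\pi}\node{A} \arrow{e} \node{0.}
\end{diagram}
$$

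Next, I would verify the generic property for the two end terms. For the quotient $A\sub \PP^N(K)$, which is an embedded abelian variety over $K$, Proposition~\ref{abelianGP} gives the generic property with respect to $\CR$ directly. For the kernel $L$, being a commutative linear algebraic group over $K$, we have a $K$-embedding $L\hookrightarrow GL(n,K)$ for some $n$; via the standard identification $GL(n,K)\hookrightarrow GL(2n,R)$ induced by $K=R\oplus Ri$, this realises $L$ as a definable abelian subgroup of $GL(2n,R)$, so Corollary~\ref{cor:linear} applies and gives the generic property for $L$. With both $L$ and $A$ having the generic property, Theorem~\ref{t:Thmexten} applied to the exact sequence above yields the generic property for $H$.

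The only genuinely nontrivial point is the Chevalley--Barsotti decomposition together with the verification that the exact sequence lives inside the semialgebraic category over $R$, since everything else is a direct application of previously established results. This is standard: the subgroup $L$ and the quotient map $\pi:H\to H/L$ are algebraic over $K$, so the induced maps on $R$-points are $R$-semialgebraic, hence definable in $\CR$. Once this is in place, the argument is essentially a bookkeeping step combining Proposition~\ref{abelianGP}, Corollary~\ref{cor:linear}, and Theorem~\ref{t:Thmexten}.
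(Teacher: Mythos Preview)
Your proposal is correct and follows essentially the same route as the paper: invoke Chevalley's theorem to obtain the exact sequence $0\to L\to H\to A\to 0$, handle the linear kernel $L$ via the identification $GL(n,K)\hookrightarrow GL(2n,R)$ and Corollary~\ref{cor:linear}, handle the abelian variety quotient $A$ via Proposition~\ref{abelianGP}, and conclude by Theorem~\ref{t:Thmexten}. The paper additionally prefaces the argument with the reduction to an $\aleph_1$-saturated model of $T_0$ (as in the proof of Proposition~\ref{abelianGP}), but this step is already absorbed in the cited ingredients and your omission of it is harmless.
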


\begin{proof} As in the proof of Proposition \ref{abelianGP}, we can assume that $\mathcal{R}$ is
an $\aleph_1$-saturated model of $T_0:=\text{Th}(\mathcal{R})\cup T_{\text{an,exp}}$.

By Corollary \ref{cor:linear}, the result is true when $H$ is linear (notice that
every linear subgroup of $GL(n,K)$ can be viewed as a linear subgroup of $GL(m,R)$
for some $R$).

For the general case, by Chevalley's theorem, there are a linear group $L$ and an
abelian variety $A$ such that the following is an exact sequence:
$$0 \rightarrow L\rightarrow H \rightarrow  A \rightarrow 0.$$
Thus, by  Theorem \ref{t:Thmexten} and Proposition \ref{abelianGP}, the group $H$
also has the generic property.
\end{proof}
\begin{remark}\label{realALG}If $H$ is an abelian $K$-algebraic  group defined over $R$,
then by Remark \ref{rmkgenprop} and Proposition \ref{l:linear}, the group of $R$-rational points
 $H(R)^\text{o}$ has the generic property.
\end{remark}

Before reaching our main theorem we recall the following result of Barriga,
\cite[Theorem\,10.2]{B}, which describes every semialgebraic group in terms of the
$R$-points of an associated algebraic group over $R$.
\begin{fact}\label{barriga}
Let $G$ be a definably compact and connected semialgebraic abelian group over $R$. Then there
exists a $K$-algebraic group $H$ defined over $R$,
  an open connected locally semialgebraic subgroup $\mathcal{W}$ of the o-minimal universal
   covering group
$\widetilde{H(R)^{\text{o}}}$ of the connected component of $H(R)$, and a locally
semialgebraic surjective covering homomorphism $\theta: \mathcal{W}\rightarrow G$,
with $0$-dimensional kernel.
\end{fact}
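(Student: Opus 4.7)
The plan is to combine the structure theory of definably compact abelian semialgebraic groups with the known theory of the $R$-points of algebraic groups over $K=R(i)$. Since $G$ is definably compact, connected, and abelian, its o-minimal universal cover $\widetilde G$ is a torsion-free connected locally semialgebraic abelian group, and $G\cong \widetilde G/\Gamma$ for some lattice $\Gamma\cong \Z^k$ (existence of such a lattice in the definably compact case follows from Fact \ref{Thm21}). So the ``upstairs'' object with a $0$-dimensional kernel mapping onto $G$ already exists; what must be achieved is to realize it as an open connected subgroup of the universal cover of $H(R)^{\text{o}}$ for a suitable $K$-algebraic group $H$ defined over $R$.

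To produce $H$, I would first apply a Chevalley-type decomposition in the semialgebraic category: isolate a maximal definably compact linear subgroup of $G$ (handled via \cite[Proposition~3.10]{PPS}, which gives a semialgebraic isomorphism with $H_1(R)^{\text{o}}$ for some linear $R$-algebraic group $H_1$) and a complementary abelian-variety-like quotient (handled by the constructions of \cite{ps2} in the style of Fact \ref{AbelianTorus}, providing a $K$-algebraic group $H_2$ defined over $R$ together with a covering map from an open subgroup of $\widetilde{H_2(R)^{\text{o}}}$). Assemble $H$ as the appropriate extension of $H_2$ by $H_1$. The identification of $\widetilde G$ with an open connected subgroup $\mathcal W\sub \widetilde{H(R)^{\text{o}}}$ is then obtained by matching the infinitesimal data and fundamental groups on both sides, and the covering $\theta:\mathcal W\to G$ is the composition of this identification with the quotient $\widetilde G\to G$. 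The kernel is $0$-dimensional because $\dim \mathcal W=\dim G$ and $\theta$ is a local homeomorphism.

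The main obstacle is the production of the algebraic group $H$ defined over $R$ whose real component matches the universal-cover data of $G$, and the compatibility of the lattice $\Gamma$ with the fundamental group of $H(R)^{\text{o}}$. This requires delicate rigidity arguments relating the non-standard semialgebraic category to the algebraic one over $K$, and is the deepest content of \cite{B}.
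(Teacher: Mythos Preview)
The paper does not prove this statement at all: it is quoted verbatim as a black-box result of Barriga, \cite[Theorem~10.2]{B}, and is used as an input to Theorem~\ref{t:mainsemi}. So there is no ``paper's own proof'' to compare against, and your proposal is really an outline of how one might try to reprove Barriga's theorem.

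As such an outline, it has a genuine gap. Your second paragraph proposes a ``Chevalley-type decomposition in the semialgebraic category'': find a maximal definably compact linear subgroup of $G$, identify it with $H_1(R)^{\mathrm o}$ via \cite[Proposition~3.10]{PPS}, and treat the quotient as ``abelian-variety-like'' via \cite{ps2}. But \cite[Proposition~3.10]{PPS} applies to groups already known to be linear (i.e.\ already sitting inside some $GL(n,R)$); it does not produce a linear subgroup of an arbitrary semialgebraic $G$. More seriously, calling the quotient ``abelian-variety-like'' and invoking Fact~\ref{AbelianTorus} presupposes that this quotient is isogenous to the real points of an abelian variety --- which is exactly the content one is trying to establish. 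Fact~\ref{AbelianTorus} goes from an embedded abelian variety to a covering by a subgroup of $R^g$, not the other way; it gives you nothing for a semialgebraic group not yet known to be algebraic. So the proposed decomposition is circular: it assumes the algebraicity that the theorem is meant to deliver.

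You correctly identify in your final paragraph that producing $H$ is the real obstacle and that this is ``the deepest content of \cite{B}''. That is accurate, but it also means your proposal is not a proof sketch so much as a restatement of the problem together with an acknowledgment that the answer lies in \cite{B}. Barriga's actual argument does not proceed via a semialgebraic Chevalley decomposition of $G$; it builds $H$ by analyzing the stabilizer-type algebraic group associated to $G$ (in the spirit of the Hrushovski--Pillay construction of the intrinsic algebraic group attached to a definably compact group), and then compares universal covers. If you want to reconstruct the proof, that is the line to follow.
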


\begin{theorem}\label{t:mainsemi}For $\CR$ an o-minimal structure expanding a real closed field $R$,
as before,  let $G$ be an abelian semialgebraic group over $R$. Then $G$ has the
generic property with respect to $\mathcal{R}$. In particular, any semialgebraically
generated subgroup of $G$ contains a generic semialgebraic subset.
\end{theorem}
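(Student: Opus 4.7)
The plan is to reduce the problem to the two kinds of building blocks we already control, namely torsion-free definable groups (Corollary \ref{torsionfree}) and abelian algebraic groups (Proposition \ref{l:linear} together with Remark \ref{realALG}), and then assemble the pieces via the extension theorem (Theorem \ref{t:Thmexten}) and the permanence properties of Remark \ref{rmkgenprop}.

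First I would use Lemma \ref{lemConn} to reduce to the case where $G$ is definably connected. Next, invoking the structure theory of abelian semialgebraic groups over a real closed field, $G$ admits a (unique) maximal definably compact subgroup $K$ such that both $K$ and $G/K$ are semialgebraic and $G/K$ is torsion-free. This yields the exact sequence
$$0 \to K \to G \to G/K \to 0.$$
By Corollary \ref{torsionfree}, $G/K$ has the generic property with respect to $\mathcal{R}$, so by Theorem \ref{t:Thmexten} it suffices to establish the generic property for $K$.

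To handle the definably compact piece $K$, I would apply Barriga's Fact \ref{barriga} to obtain a $K$-algebraic group $H$ defined over $R$, an open connected locally semialgebraic subgroup $\mathcal{W}$ of the universal cover $\widetilde{H(R)^{\text{o}}}$, and a locally semialgebraic surjective homomorphism $\theta:\mathcal{W}\to K$ with $0$-dimensional kernel. By Remark \ref{realALG}, $H(R)^{\text{o}}$ has the generic property. The universal covering map gives an exact sequence
$$0 \to \ker p \to \widetilde{H(R)^{\text{o}}} \to H(R)^{\text{o}} \to 0$$
whose kernel is $0$-dimensional and therefore (trivially) has the generic property, so Theorem \ref{t:Thmexten} transfers the generic property to $\widetilde{H(R)^{\text{o}}}$. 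By Remark \ref{rmkgenprop}(2) this is inherited by the locally definable subgroup $\mathcal{W}$, and by Remark \ref{rmkgenprop}(3) it passes through $\theta$ to the quotient $K$. The ``in particular'' clause is immediate from the definition of the generic property.

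I expect the only delicate step to be the very first one, namely the extraction of a ``definably compact-by-torsion-free'' exact sequence in the semialgebraic category: one must cite (or re-derive) the existence of a maximal definably compact subgroup together with torsion-freeness of the quotient in a form that keeps both pieces $R$-semialgebraic, so that Barriga's theorem applies to the compact part and Corollary \ref{torsionfree} to the other. Everything past that point is routine bookkeeping with the extension theorem and the permanence properties of Remark \ref{rmkgenprop}.
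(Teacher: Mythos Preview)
Your overall strategy coincides with the paper's: reduce to the definably compact case, then push Barriga's description through the extension theorem and Remark~\ref{rmkgenprop}. The one substantive difference is the direction of the first exact sequence. You propose a decomposition
\[
0 \to K \to G \to G/K \to 0
\]
with $K$ the maximal definably compact subgroup and $G/K$ torsion-free, whereas the paper uses the opposite orientation: by \cite[Theorem~1.2]{PSe} (applied finitely many times) one obtains a torsion-free definable subgroup $H$ with $G/H$ definably compact, and then replaces $G$ by $G/H$. After that point the two arguments are identical (Barriga, the universal-cover exact sequence, discreteness of $\ker p$, and Remark~\ref{rmkgenprop}(2),(3)).

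The paper's orientation is the cleaner choice precisely because the Peterzil--Steinhorn result is directly citable and lives in the semialgebraic category, so the compact quotient is automatically semialgebraic and Barriga applies without further work. Your orientation is not wrong, but---as you yourself flag---it hinges on the existence of a maximal definably compact \emph{subgroup} with torsion-free quotient, kept inside the semialgebraic category; this is a less standard structural statement that you would have to supply a reference or argument for (it essentially amounts to a splitting of the Peterzil--Steinhorn sequence). So nothing is broken, but the ``delicate step'' you isolate is exactly the place where the paper's choice avoids extra work.
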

\begin{proof}By Lemma \ref{lemConn} it is enough to show that every locally
definable subgroup of $G$ generated by a definably connected  set contains a
definable generic subset. Thus, we can assume that $G$ is connected.

By \cite[Theorem 1.2]{PSe}, applied finitely many times, $G$ contains a torsion-free subgroup $H$ such that the quotient $G/H$  is definably compact. Thus, by Theorem \ref{t:Thmexten}
 and Corollary \ref{torsionfree}, we may assume that $G$ is definably compact.

Using the notation of  Fact \ref{barriga}, we have a covering homomorphism
$\theta: \mathcal{W}\rightarrow G$, with $\mathcal W$ a definably generated subgroup
of the locally definable group $\widetilde{H(R)^{\text{o}}}$.

Denote by $p:\widetilde{H(R)^{\text{o}}}\rightarrow H(R)^{\text{o}}$ the universal covering map.
 We have the exact sequence
$$0 \rightarrow \ker(p)\rightarrow \widetilde{H(R)^{\text{o}}} \rightarrow  H(R)^{\text{o}}\rightarrow 0.$$
Note that $\ker(p)$ is discrete and therefore its only semialgebraically generated
connected subgroup is the trivial one, so by Lemma \ref{lemConn} the group $\ker(p)$
has the generic property. Thus, by Theorem \ref{t:Thmexten} and Remark
\ref{realALG}, we deduce that $\widetilde{H(R)^\text{o}}$ has
 the generic property. In particular, by Remark \ref{rmkgenprop}(2), the same is true for $\mathcal{W}$, and by (3), also for $G$, as required.
\end{proof}

\section{Appendix: Abelian Varieties}\label{appendix:abel}

Fact \ref{AbelianTorus} is a consequence of the results in \cite{PSe}. Maybe not in this form, we believe it
is well-known by the experts (e.g.,  a similar statement is used in \cite[\S 5.2.2 and \S 5.3]{S}). For the sake of completeness, we  sketch a proof in this appendix. As in \cite{PSe}, we  quote several facts concerning abelian varieties, see  \cite{BL} for details.

For a positive $g\in \N$, by a \emph{complex $g$-torus} we mean the quotient group $\C^g/\Lambda$ where $\Lambda$ is a lattice, i.e., a subgroup of $(\C^g,+)$ generated by $2g$ vectors which are $\R$-linearly independent. It is  a  compact  complex  Lie group  of  dimension $g$.  A torus $\C^g/\Lambda$ is called
an \emph{abelian variety} if it is biholomorphic to a complex \emph{embedded abelian variety}, namely a projective connected complex algebraic group.

By a theorem of Baily \cite{Bai}, for any $g\in \N$ there is a countable collection of constructible families of embedded abelian varieties $\{\mathcal{A}^D\}_{D\in \mathcal{D}}$, parameterized by certain polarizations $D\in \mathcal{D}$, each of the form
$$\mathcal{A}^D=\{A^D_t\subset \mathbb{P}^N_D(\C): t\in S_D\},$$
such that every $g$-dimensional embedded abelian variety
is isomorphic to a member of  one of the $\mathcal A^D$'s (see also \cite[Thm. 8.11]{ps2}).

We now denote by $\mathbb T$ the semialgebraic compact group $([0,1), +\, \mbox{mod} 1)$, which is isomorphic to $\mathbb R/\mathbb Z$. A consequence of  \cite[Theorem 8.10]{ps2}  is the following:
\begin{fact}\label{theta}For any $g\in \N$  and for each $D\in \mathcal{D}$
there exists in $\mathbb R_{an,exp}$ a definable family of group isomorphisms between the members of $\mathcal A^D$ and
$\mathbb T^{2g}$.
\end{fact}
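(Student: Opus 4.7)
The plan is to extract Fact \ref{theta} as a direct consequence of \cite[Theorem 8.10]{ps2}, which establishes the $\mathbb{R}_{an,exp}$-definability, uniform in the parameter, of the theta uniformization of families of abelian varieties. My task in this proposal is to explain how to turn that theta embedding into a \emph{group isomorphism} with the standard real torus $\mathbb{T}^{2g}$.

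First, I would recall that each $A^D_t$ is a complex torus: after possibly enlarging the level structure so that $t$ encodes a normalized period matrix, the family $\mathcal{A}^D$ comes with a map $t \mapsto \tau_t$ into the Siegel upper half-space, with associated lattice $\Lambda_t := \mathbb{Z}^g + \tau_t \mathbb{Z}^g \subset \mathbb{C}^g$ and a biholomorphic group isomorphism $\mathbb{C}^g / \Lambda_t \to A^D_t$ given by the theta functions with a finite set of characteristics determined by $D$.

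Next, on the real side I would parametrize $\mathbb{C}^g / \Lambda_t$ by $\mathbb{T}^{2g}$. Identifying $\mathbb{T}^{2g}$ with $[0,1)^g \times [0,1)^g$ under coordinate-wise addition modulo $1$, the map $(x, y) \mapsto x + \tau_t y$ is semialgebraic in $(x, y, t)$ on the fundamental domain and descends to a real-analytic group isomorphism $\psi_t : \mathbb{T}^{2g} \to \mathbb{C}^g / \Lambda_t$. Composing with the theta embedding $\Theta_t : \mathbb{C}^g / \Lambda_t \to A^D_t \subset \mathbb{P}^N_D(\mathbb{C})$ produces the candidate family $\phi_t := \Theta_t \circ \psi_t : \mathbb{T}^{2g} \to A^D_t$ of group isomorphisms. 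Since $\Theta_t$ is built from the theta functions $\vartheta[a;b](\cdot, \tau_t)$ and \cite[Theorem 8.10]{ps2} asserts that on the relevant bounded region in $(z, \tau)$ each such theta function is an $\mathbb{R}_{an,exp}$-definable function uniformly in $\tau$, the family $\{\phi_t\}_{t \in S_D}$ is $\mathbb{R}_{an,exp}$-definable, which is exactly the required conclusion.

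The main obstacle, already overcome in \cite{ps2}, is the uniform definability of the theta series $\vartheta(z, \tau) = \sum_{n \in \mathbb{Z}^g} \exp(\pi i\, n^\top \tau n + 2\pi i\, n^\top z)$: as an infinite sum this is a priori only a holomorphic, not a definable, object. The content of \cite[Theorem 8.10]{ps2} is that on a suitably chosen bounded region one can truncate the series with a uniformly controlled tail and express the resulting finite sum, through the complex exponential, as a function definable in $\mathbb{R}_{an,exp}$. Granted this input, the remaining bookkeeping—passing to an affine chart of $\mathbb{P}^N_D(\mathbb{C})$, identifying $\mathbb{T}^{2g}$ with the fundamental parallelogram, and verifying that $\phi_t$ is a bijective group homomorphism—is routine.
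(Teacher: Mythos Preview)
Your proposal is correct and follows essentially the same route as the paper: invoke \cite[Theorem 8.10]{ps2} to obtain an $\mathbb{R}_{an,exp}$-definable family of biholomorphisms between the complex tori $\mathbb{C}^g/\Lambda_t$ and the members $A^D_t$, and then compose with the definable identification of each torus with $\mathbb{T}^{2g}$ via its fundamental parallelogram. The paper's argument is terser and leaves the content of the cited theorem implicit, whereas you spell out the theta uniformization and the definability issue for the theta series, but the structure is the same.
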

Indeed, the family of maps which is given by $\Phi^D$ in \cite[Theorem 8.10]{ps2} yields group biholomorphisms between   complex tori of the form $\mathcal E_t=\mathbb C^g/\Lambda_t$ and  the members of  $A_t\in \mathcal A^D$.
Each $\mathcal E_t$, via its fundamental domain, is definably group-isomorphic to $\mathbb T^{2g}$.

\begin{proposition}\label{p:finPol} Let $\mathcal{A}=\{A_t: t\in T\}$ be a constructible family without parameters of  embedded $g$-dimensional abelian varieties of $\mathbb{P}^N(\C)$.

Then there are constructible $T_1,\ldots, T_k\sub T$ covering  $T$, and there are finitely many  $D_1,\ldots,D_k\in \mathcal{D}$ and $d\in \mathbb{N}$ such that for any $j=1,\ldots, k$, and $t\in T_j$ there exists an algebraic isomorphism between $A_t$ and an abelian variety in $\mathcal{A}^{D_j}$. Moreover, for each $j$, there is a constructible family of isomorphisms $\mathcal H^{D_j}$, between the members of  $\{A_t:t\in T_j\}$  and of  $\mathcal A^{D_j}$.
\end{proposition}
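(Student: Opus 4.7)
The plan is to use Baily's theorem to stratify $T$ by the countable family $\{\mathcal{A}^D\}_{D\in\mathcal{D}}$, reduce to finitely many strata using uncountability of $\mathbb{C}$, and finally select the isomorphisms constructibly via quantifier elimination in ACF. First, for each $D\in\mathcal{D}$ I would set
$$T^D:=\{t\in T : A_t \text{ is algebraically isomorphic to some } A^D_s,\ s\in S_D\},$$
so by Baily's theorem $T=\bigcup_{D\in\mathcal{D}} T^D$. The defining condition of $T^D$ asserts the existence of some $s\in S_D$ and a polynomial isomorphism $\phi:A_t\to A^D_s$; this is a first-order condition on $t$ in the language of rings (the algebraicity of morphisms between projective varieties allows us to quantify over the coefficients of $\phi$ up to some degree, uniformly in $t,s$). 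By Chevalley's theorem each $T^D$ is therefore constructible.

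Next I would show that finitely many of the $T^D$ already cover $T$. The key input is that an irreducible complex variety of positive dimension cannot be written as a countable union of proper Zariski-closed subvarieties, since $\mathbb{C}$ is uncountable and one can induct on dimension. Consequently, if an irreducible constructible set $T'\subseteq T$ is covered by countably many constructibles, at least one of them must be Zariski-dense in $T'$, and hence contains a nonempty Zariski-open subset of $T'$. Applying this to each irreducible component of the Zariski closure of $T$, and then to the (lower-dimensional) Zariski closure of the part not yet covered, noetherian induction gives finitely many $D_1,\ldots,D_k\in\mathcal{D}$ with $T=\bigcup_{j=1}^k T^{D_j}$. Setting $T_j:=T^{D_j}$ (or successively removing already-covered parts if one prefers a partition) yields the required covering.

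For the constructible family of isomorphisms, for each $j$ I would consider the constructible set
$$\Xi_j:=\{(t,s,\phi) : t\in T_j,\ s\in S_{D_j},\ \phi:A_t\to A^{D_j}_s\text{ an algebraic isomorphism}\}$$
together with its projection $\pi_j:\Xi_j\to T_j$, which is surjective by construction. Standard piecewise-constructible selection in ACF (a consequence of quantifier elimination applied to the fibers of $\pi_j$) lets one refine $T_j$ into finitely many constructible pieces on each of which $\pi_j$ admits a constructible section; relabeling we obtain the desired constructible family $\mathcal{H}^{D_j}$ of isomorphisms. The integer $d\in\mathbb{N}$ in the statement can be taken to be a uniform bound on the degrees of the polynomials defining these isomorphisms, which is automatic once the section is constructible.

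The main obstacle I anticipate is the finiteness step, where one must verify carefully that countable constructible covers of complex algebraic sets admit finite subcovers via the uncountability of $\mathbb{C}$ and noetherian induction, rather than merely via covers by Zariski-closed subvarieties. A secondary technicality is to make sure that ``$A_t$ is algebraically isomorphic to $A^D_s$'' is genuinely first-order uniformly in the defining equations of the family: this is standard but requires bounding the degree of the candidate morphisms $\phi$ uniformly so that they vary in a single constructible family, after which Chevalley delivers the constructibility of $T^D$.
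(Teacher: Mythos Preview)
Your approach is essentially the same as the paper's: both arguments exploit that $\mathbb{C}$, being uncountable (equivalently, $\aleph_1$-saturated as a model of ACF$_0$), cannot have a definable set written as a countable union of definable subsets without a finite subcover. The paper simply phrases your noetherian--induction--over--$\mathbb{C}$ step as ``because the complex field is $\aleph_1$-saturated''; these are the same fact.

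There is, however, a genuine ordering issue in your write-up. You define $T^D$ without any degree bound on the isomorphism and then assert it is constructible, but the condition ``$A_t$ is algebraically isomorphic to some $A_s^D$'' is an infinite disjunction over degrees and is \emph{not} first-order as written; you yourself flag this at the end as needing a uniform degree bound, which you only produce \emph{after} the finiteness step. This is circular. The clean fix---and what the paper in effect does---is to index by both $D$ and a degree $d$: set $T^{D,d}=\{t\in T:\exists s\in S_D\ \exists\,\phi\text{ an isomorphism }A_t\to A_s^D\text{ of degree }\le d\}$. Each $T^{D,d}$ is constructible by Chevalley, $\{T^{D,d}\}_{D\in\mathcal D,\,d\in\mathbb N}$ is countable and covers $T$ by Baily, and then saturation (your uncountability/noetherian argument) yields finitely many pairs $(D_1,d_1),\ldots,(D_k,d_k)$ that already cover $T$. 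Taking $d=\max_j d_j$ gives the uniform degree bound and the constructibility of the $T_j$ in one stroke, after which your selection argument for the family $\mathcal H^{D_j}$ goes through.
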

\begin{proof} By Fact \ref{theta}, each $A_t\in \mathcal A$ is bi-regularly isomorphic to some $A\in \mathcal{A}^D$, for $D\in \mathcal D$. Because the complex field is $\aleph_1$-saturated and there are countably many $D\in \mathcal D$ it follows
that there exist $D_1,\ldots,D_k\in \mathcal{D}$ such that for any $t\in T$ we
have that $A_t$ is bi-regularly isomorphic to an abelian variety in
$\mathcal{A}^{D_j}$ for some $j\in \{1,\ldots,k\}$. Again by saturation, the
degree of the isomorphism, as $t$ varies in $T_j$, is uniformly bounded by some $d$.

Now, for each $j\in \{1,\ldots,k\}$ the set $T_j$ of $t\in T$ such that there exists a bi-regular isomorphism of degree at most $d$ between $A_t$ and an abelian variety in $\mathcal{A}^{D_j}$ is constructible without parameters. Because of the above bound on the degree,  there exists a constructible family of isomorphisms $\mathcal H^{D_j}$ as required.
\end{proof}

\begin{proof}[Proof of Fact \ref{AbelianTorus}] We now return to the setting of \ref{AbelianTorus} with $R$ a real closed field and $K=R(i)$ its algebraic closure. Let $A\sub \mathbb P^N(K)$ be an embedded abelian variety. Let $c\in K^\ell$ be a tuple of coefficients defining algebraically the variety $A$.


We can replace the parameter $c$ by a tuple $t$ of free variables and therefore we obtain (without parameters) a constructible family $\mathcal{A}=\{A_t:t\in T\}$ of abelian subvarieties of $\mathbb{P}^N(K)$ of dimension $g:=\dim(A)$.

Consider the realization $\mathcal{A}(\C)$ of $\mathcal{A}$ in $\C$. By Proposition \ref{p:finPol} we can assume that $\mathcal A$ is a sub-family of $\mathcal A^D$ for some $D\in \mathcal D$.

By Fact \ref{theta} there is a definable family in $\mathbb{R}_{an,exp}$ of group isomorphisms
between the members of $\mathcal A^D$ and $\mathbb T^{2g}$. Going back to $R$ and $K$, we can find a definable isomorphism between $A$ and $\mathbb T^{2g}$ with its natural realization in $R$. Finally, there is a locally definable covering map from a locally definable subgroup of $R^{2g}$ onto $\mathbb T^{2g}$, so also onto $A$.\end{proof}



\end{document}